\newtheorem{theorem}{Theorem}
\newtheorem{corollary}[theorem]{Corollary}
\newtheorem{lemma}[theorem]{Lemma}
\newtheorem{proposition}[theorem]{Proposition}
\newtheorem{conjecture}[theorem]{Conjecture}
\theoremstyle{definition}
\newtheorem{remark}[theorem]{Remark}
\newtheorem*{acknowledgement}{Acknowledgement}
\newtheorem*{notation}{Notation}
\newcommand{\C}{\mathbb{C}}
\newcommand{\R}{\mathbb{R}}
\newcommand{\Z}{\mathbb{Z}}
\newcommand{\N}{\mathbb{N}}
\newcommand{\T}{\mathbb{T}}
\newcommand{\ud}{\mathrm{d}}
\title{Uniform $L^p$ Resolvent Estimates on the Torus}
\date{}
\subjclass[2010]{35J05, 35P20, 11P21}
\author{Jonathan Hickman}
\address{Mathematical Institute, University of St Andrews, North Haugh, St Andrews, Fife, KY16 9SS, UK.}
\email{jeh25@st-andrews.ac.uk}
\begin{document}

\begin{abstract} 
 A new range of uniform $L^p$ resolvent estimates is obtained in the setting of the flat torus, improving previous results of Bourgain, Shao, Sogge and Yao. The arguments rely on the $\ell^2$-decoupling theorem and multidimensional Weyl sum estimates. 
\end{abstract}

\maketitle




\section{Introduction}

This article continues a line of investigation pursued by Dos Santos Ferriera, Kenig and Salo~\cite{DKS} and Bourgain, Shao, Sogge and Yao~\cite{BSSY} concerning uniform $L^p$ estimates for resolvents of Laplace--Beltrami operators on compact manifolds. Here new bounds are obtained only in the special case of the flat $n$-dimensional torus $\T^n := \R^n \setminus \Z^n$ but, in order to contextualise the results, it is useful to recall the general setup from~\cite{DKS, BSSY}. To this end, let $(M,g)$ be a smooth, compact manifold of dimension $n \geq 3$ without boundary and $\Delta_g$ be the associated Laplace--Beltrami operator. In~\cite{DKS} the following problem was introduced: determine the regions $\mathcal{R} \subseteq \C$ for which there is a uniform bound
\begin{equation}\label{uniform resolvent}
    \|u\|_{L^{\frac{2n}{n-2}}(M)} \leq C_{\mathcal{R}} \|(\Delta_g + z)u\|_{L^{\frac{2n}{n+2}}(M)} \qquad \textrm{for all $z \in \mathcal{R}$.}
\end{equation}

Interest in inequalities of the form~\eqref{uniform resolvent} was partly inspired by earlier work on the standard Laplacian on $n$-dimensional euclidean space. In the euclidean setting scaling considerations imply that $(\frac{2n}{n+2}, \frac{2n}{n-2})$ is the only exponent pair lying on the line of duality for which~\eqref{uniform resolvent} is meaningful; this observation also motivates the choice of Lebesgue exponents featured above. Moreover, it was shown by Kenig, Ruiz and Sogge~\cite{KRS} that the euclidean analogue of~\eqref{uniform resolvent} holds for $\mathcal{R} = \C$. By contrast, such uniformity in~\eqref{uniform resolvent} patently fails for compact manifolds $(M,g)$: in this case $-\Delta_g$ has a discrete spectrum and therefore~\eqref{uniform resolvent} cannot hold whenever $z$ is an eigenvalue of $-\Delta_g$. It is therefore natural when working in the compact manifold setting to consider regions $\mathcal{R}$ which are bounded away from the non-negative real line, and thereby avoid the spectrum. 

As in \cite{BSSY}, it is convenient to write $z = (\lambda + i\mu)^2$ for some $\lambda, \mu \in \R$ and express the results in terms of these real parameters. For $\lambda \leq 1$ the situation is relatively easy to understand and is treated in \cite[\S 2]{BSSY}. Henceforth, it is assumed that $\lambda \geq 1$. The problem is to determine how small $|\mu|$ can be (in terms of $\lambda$) whilst retaining uniformity in \eqref{uniform resolvent}. 

\begin{theorem}\label{resolvent theorem} Let $n \geq 3$ and $\Delta_{\T^n}$ be the Laplacian on the flat torus $\T^n := \R^n /\Z^n$. For all $\varepsilon > 0$ the uniform $L^p$ resolvent bound 
\begin{equation*}
    \|u\|_{L^{\frac{2n}{n-2}}(\T^n)} \leq C_{\varepsilon} \| (\Delta_{\T^n} + z)u\|_{L^{\frac{2n}{n+2}}(\T^n)}
\end{equation*}
holds whenever $z \in \C$ belongs to the region
\begin{equation*}
    \mathcal{R}_{\mathrm{new}} := \big\{ z = (\lambda + i \mu)^2  \in \C : \lambda, \mu \in \R, \lambda \geq 1, \, |\mu| \geq \lambda^{-1/3 + \varepsilon} \big\}.
\end{equation*}
\end{theorem}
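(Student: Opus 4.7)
The plan is to pass to Fourier series and write $(\Delta_{\T^n}+z)^{-1}$ as a Fourier multiplier operator $T_z$ on $\T^n$ with symbol $m_z(k) := (z-4\pi^2|k|^2)^{-1}$ for $k \in \Z^n$. Parametrising $\sqrt{z} = \lambda + i\mu$, one has $|m_z(k)| \asymp (\bigl|\lambda^2 - 4\pi^2|k|^2\bigr| + \lambda|\mu|)^{-1}$, so the multiplier is essentially concentrated in a shell of thickness $\lambda|\mu|$ around the critical sphere $\{|\xi|=\lambda/(2\pi)\}$. The contribution from frequencies with $|k|$ not comparable to $\lambda$ can be handled by the methods already developed in \cite{BSSY}, so I would focus on the near-sphere part, dyadically decomposing $T_z = \sum_{j \geq 0} T_j$ according to the shells $A_j := \{k \in \Z^n : \bigl|\lambda^2 - 4\pi^2|k|^2\bigr| \sim 2^j \lambda|\mu|\}$. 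Since $|m_z| \leq (2^j \lambda|\mu|)^{-1}$ on $A_j$, the problem reduces to proving $L^{2n/(n+2)}\to L^{2n/(n-2)}$ bounds for the Fourier projectors $\Pi_{A_j}$, with a geometric factor in $j$ that can be summed.

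The heart of the proof is the case $j = 0$, concerning a thin lattice neighbourhood of the spectral sphere. For this I would invoke the $\ell^2$-decoupling theorem of Bourgain--Demeter at cap scale $\lambda^{-1/2}$: decoupling the sphere into $\sim \lambda^{(n-1)/2}$ caps converts the required estimate on the full shell into a sum of cap contributions. Each such contribution, after rescaling and exploiting the fact that the sphere is well-approximated by a paraboloid on each cap, amounts to controlling the $L^{2n/(n-2)}$ norm of an exponential sum over lattice points in a thin paraboloidal slab. These slab bounds are precisely the content of the multidimensional Weyl sum estimates mentioned in the abstract; one natural source is the $\ell^2$-decoupling theorem itself, applied to the paraboloid, which gives sharp mean-square control of the relevant number-theoretic sums.

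The principal obstacle is that $p' = \frac{2n}{n-2}$ lies \emph{beyond} the sharp $\ell^2$-decoupling range for the $(n-1)$-dimensional sphere. Decoupling must therefore be combined with interpolation against elementary $L^2$ or $L^\infty$ bounds, or with further pointwise control on the distribution of lattice points (for example via bounds of Gauss or Landau type for lattice points on spheres). The threshold $|\mu| \geq \lambda^{-1/3+\varepsilon}$ should emerge as the scale at which the analytic loss from this interpolation exactly balances the arithmetic saving from the Weyl-sum count: below it, the lattice points in the critical shell $A_0$ are too sparse to absorb the decoupling loss, whilst above it the simpler estimates of \cite{BSSY} already suffice. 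Carrying out this balancing carefully, and propagating the resulting bound uniformly in the dyadic parameter $j$, is the technically delicate part of the argument.
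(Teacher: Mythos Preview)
Your reduction to spectral projector bounds via dyadic decomposition of the multiplier is essentially the equivalence theorem of \cite{BSSY} (Theorem~\ref{equivalence theorem} in the paper), and your recognition that $\tfrac{2n}{n-2}$ lies beyond the sharp decoupling exponent and therefore requires interpolation is also correct. But the proposal has two concrete gaps.

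First, the multidimensional Weyl sum estimates are \emph{not} used in the proof of Theorem~\ref{resolvent theorem}; they enter only in the refinement (Theorem~\ref{refined resolvent theorem}). For the basic $-1/3$ exponent the paper interpolates the $L^{\frac{2(n+1)}{n+3}}\to L^{\frac{2(n+1)}{n-1}}$ decoupling bound against an $L^1\to L^\infty$ kernel estimate, and that kernel estimate is obtained by the classical Hlawka method: Poisson summation converts $\sum_{k}m_1^{\lambda,\rho}(k)e^{2\pi i x\cdot k}$ into $\sum_k (m_1^{\lambda,\rho})\,\widecheck{}\,(x+k)$, stationary phase for the Fourier transform of surface measure gives $|(m^{\lambda,\rho})\,\widecheck{}\,(x)|\lesssim \rho\lambda^{(n-1)/2}|x|^{-(n-1)/2}(1+\rho|x|)^{-N}$, and summing yields $\|m_1^{\lambda,\rho}(D)\|_{1\to\infty}\lesssim(\lambda/\rho)^{(n-1)/2}$. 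Interpolating this against the decoupling bound $(\rho\lambda)^{(n-1)/(n+1)}$ via Riesz--Thorin is precisely what produces $\rho^{1-3/n}\lambda^{1-1/n}$, hence the threshold $\rho=\lambda^{-1/3}$. Your proposal never isolates this $L^1\to L^\infty$ mechanism; ``elementary $L^2$ or $L^\infty$ bounds'' and ``Gauss or Landau type'' lattice-point counts do not substitute for it, because the relevant quantity is a \emph{signed} kernel sum, not a cardinality.

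Second, the decoupling step is simpler than you suggest: there is no need to decouple the sphere at scale $\lambda^{-1/2}$ and then pass to paraboloids on caps. One applies Bourgain--Demeter directly to $\lambda S^{n-1}$ at cap scale $(\rho\lambda)^{1/2}$, giving the $L^{p_0'}\to L^{p_0}$ projector bound in one shot (Corollary~\ref{decoupling corollary 1}). The paper also needs a preliminary splitting $m^{\lambda,\rho}=m_0^{\lambda,\rho}+m_1^{\lambda,\rho}$, with $m_0$ a mollified piece handled by Stein--Tomas without $\varepsilon$-loss; without this, the $\lambda^\varepsilon$ loss from decoupling would not be absorbed.
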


It is useful to compare the theorem with existent results. Shen~\cite{Shen2001} previously showed that Theorem~\ref{resolvent theorem} holds in the more restrictive region
\begin{equation*}
\mathcal{R}_{\mathrm{DKSS}} := \big\{ z = (\lambda + i \mu)^2  \in \C : \lambda, \mu \in \R, \lambda \geq 1, \, |\mu| \geq 1 \big\}.
\end{equation*}
and this was later generalised to arbitrary compact manifolds by Dos Santos Ferriera, Kenig and Salo~\cite{DKS}. In~\cite{DKS} it was also asked whether it is possible to extend the uniform bounds beyond $\mathcal{R}_{\mathrm{DKSS}}$ for general manifolds. Interestingly, Bourgain, Shao, Sogge and Yao~\cite{BSSY} showed that the region $\mathcal{R}_{\mathrm{DKSS}}$ is, in fact, optimal in the case of Zoll manifolds (one example being the standard euclidean sphere $S^{n}$), in the sense that here it is not possible to relax $|\mu| \geq 1$ to $|\mu| \geq \lambda^{-\alpha}$ for any $\alpha > 0$ in $\mathcal{R}_{\mathrm{DKSS}}$. Underpinning such behaviour in the Zoll case is the tight spectral clustering exhibited by $-\Delta_g$. Clustering does not occur for the torus and, consequently, improvements may be obtained for $\T^n$. Indeed, in~\cite{BSSY} it was shown that for all $\varepsilon > 0$ Theorem~\ref{resolvent theorem} holds for the region
\begin{equation*}
\mathcal{R}_{\mathrm{BSSY}} := \big\{ z = (\lambda + i \mu)^2  \in \C : \lambda, \mu \in \R, \lambda \geq 1, \, |\mu| \geq \lambda^{-\varepsilon_n + \varepsilon} \big\}.
\end{equation*}
where $\varepsilon_n >0$ is given by
\begin{equation*}
    \varepsilon_n := \frac{2(n-1)}{n(n+1)} \textrm{ if $n \geq 3$ is odd}, \quad \varepsilon_n := \frac{2(n-1)}{n^2 +2n+2} \textrm{ if $n \geq 4$ is even};
\end{equation*}
furthermore, by using additional number theoretic input, it was also shown in~\cite{BSSY} that for $n = 3$ the slightly relaxed condition $\varepsilon_3 := \frac{85}{252}$ is sufficient. 

\begin{figure}
    \centering

\begin{tikzpicture}[scale=1]

{
\draw[red] (1,2)
\foreach \t in {1.1,1.2,...,6}
    {
        --({\t},{2*\t^(-0.4)})
    }
    node[anchor=west]{$\gamma_{\mathrm{new}}$};
    }

{
 \draw[red] (1,-2)
 \foreach \t in {1.1,1.2,...,6}
    {
        --({\t},{-2*\t^(-0.4)})
    }
    ;
    
    }

{
\draw[blue] (1,2)
\foreach \t in {1.1,1.2,...,6}
    {
        --({\t},{2})
    }
   node[anchor=west]{$\gamma_{\mathrm{DKSS}}$}; ;
    }
{
\draw[blue] (1,-2)
\foreach \t in {1.1,1.2,...,6}
    {
        --({\t},{-2})
    }
   ;
    }

    {
\draw[orange] (1,2)
\foreach \t in {1.1,1.2,...,6}
    {
        --({\t},{2*\t^(-0.15)})
    }
    node[anchor=west]{$\gamma_{\mathrm{BSSY}}$};
    }  
{    
\draw[orange] (1,-2)
\foreach \t in {1.1,1.2,...,6}
    {
        --({\t},{-2*\t^(-0.15)})
    }
    ;
    }

      {
\draw[green] (1,2)
\foreach \t in {1.1,1.2,...,6}
    {
        --({\t},{2*\t^(-1)})
    }
    node[anchor=west]{$\gamma_{\mathrm{opt}}$};
    }  
 
{    
\draw[green] (1,-2)
\foreach \t in {1.1,1.2,...,6}
    {
        --({\t},{-2*\t^(-1)})
    }
    ;
    }

  \draw[->][black, thick] 
		(-0.5,0)->(7,0) node[anchor=west]{$\lambda$};
\draw[->][black, thick] 
        (0,-2.5)->(0,2.5) node[anchor=east]{$\mu$}; 
\draw[dotted] 
        (1,-2.5)->(1,2.5)
        ; 

\draw[-][black, thick] 
        (1,-0.1)->(1,0.1)  node[pos=-1]{\Large 1} ; 
\draw[-][black, thick] 
        (-0.1,2)->(0.1,2) node[pos=-1]{\Large 1}; 
 \draw[-][black, thick] 
        (-0.1,-2)->(0.1,-2) node[pos=-1]{\Large -1};   
\end{tikzpicture}

    \caption{Successive results and the optimal region. Each curve ${\color{blue}\gamma_{\mathrm{DKSS}}}$, ${\color{orange}\gamma_{\mathrm{BSSY}}}$, ${\color{red}\gamma_{\mathrm{new}}}$ and ${\color{green}\gamma_{\mathrm{opt}}}$ corresponds to the interesting part of the boundary of  ${\color{blue}\mathcal{R}_{\mathrm{DKSS}}}$, ${\color{orange}\mathcal{R}_{\mathrm{BSSY}}}$, ${\color{red}\mathcal{R}_{\mathrm{new}}}$ and ${\color{green}\mathcal{R}_{\mathrm{opt}}}$, respectively, in the coordinates $(\lambda, \mu)$.}
    \label{figure}
\end{figure}

Theorem~\ref{resolvent theorem} provides a further improvement over the ranges $\mathcal{R}_{\mathrm{DKSS}}$ and $\mathcal{R}_{\mathrm{BSSY}}$ (at least for $n > 3$); see Figure~\ref{figure}. Note for $n=3$ the numerology of the new result coincides with the $\frac{2(n-1)}{n(n+1)}$ exponent from~\cite{BSSY}. A pleasant feature of Theorem~\ref{resolvent theorem} is that $\mathcal{R}_{\mathrm{new}}$ provides a ``uniform'' strengthening over $\mathcal{R}_{\mathrm{DKSS}}$ in all dimensions.  

It is remarked that $\mathcal{R}_{\mathrm{new}}$ is certainly not sharp and a natural conjecture would be the following.

\begin{conjecture}\label{resolvent conjecture} Let $n \geq 3$ and $\Delta_{\T^n}$ be the Laplacian on the flat torus $\T^n := \R^n /\Z^n$. For all $\varepsilon > 0$ the uniform $L^p$ resolvent bound 
\begin{equation*}
    \|u\|_{L^{\frac{2n}{n-2}}(\T^n)} \leq C_{\varepsilon} \| (\Delta_{\T^n} + z)u\|_{L^{\frac{2n}{n+2}}(\T^n)}
\end{equation*}
holds whenever $z \in \C$ belongs to the region
\begin{equation*}
    \mathcal{R}_{\mathrm{opt}} := \big\{ z = (\lambda + i \mu)^2  \in \C : \lambda, \mu \in \R, \lambda \geq 1, \, |\mu| \geq \lambda^{-1 + \varepsilon} \big\}.
\end{equation*}
\end{conjecture}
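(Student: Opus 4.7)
\emph{Proof plan.} The plan is to push the techniques that yield Theorem~\ref{resolvent theorem} to the conjectured endpoint. First I would use a Littlewood--Paley decomposition and partial fractions to reduce the problem to an $L^{\frac{2n}{n+2}} \to L^{\frac{2n}{n-2}}$ bound of size $O_\varepsilon(\lambda^\varepsilon)$ for the Fourier multiplier with symbol $\chi(|k|/\lambda)\big/(4\pi^2|k|^2 - z)$, where $\chi$ localises to $|k| \sim \lambda$. The dominant contribution comes from lattice points in the thin spherical shell
\begin{equation*}
S_\mu := \big\{k \in \Z^n : |\,4\pi^2|k|^2 - \lambda^2\,| \lesssim \lambda |\mu|\big\},
\end{equation*}
where the denominator is controlled by the imaginary part $\sim \lambda|\mu|$. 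Outside $S_\mu$ the denominator grows polynomially in the distance to the shell, and the contribution can be absorbed into a summable dyadic error using the Euclidean Kenig--Ruiz--Sogge estimate~\cite{KRS} combined with Poisson summation.

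On the shell $S_\mu$ itself, I would decompose the sphere of radius $\lambda/(2\pi)$ into caps at the scale dictated by the thickness of the shell, apply the Bourgain--Demeter $\ell^2$-decoupling theorem to reduce to the square function of cap contributions, and then bound each cap via multidimensional Weyl sum estimates for exponential sums over lattice points in that cap. To reach the endpoint $|\mu| \geq \lambda^{-1+\varepsilon}$, the cap-level input must amount to a discrete restriction estimate for $S^{n-1}$ at the exponent $\frac{2n}{n-2}$, coupled with an essentially optimal count $\lesssim \lambda^{n-2+\varepsilon}|\mu|$ for the number of lattice points in $S_\mu$, uniformly in the shell centre.

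The main obstacle is that the exponent $\frac{2n}{n-2}$ lies strictly above the decoupling endpoint $\frac{2(n+1)}{n-1}$; closing the gap at the level of a single cap is essentially equivalent to the Euclidean restriction conjecture for $S^{n-1}$, which is open in all dimensions $n \geq 3$. Similarly, the lattice-point count on a $\lambda^\varepsilon$-thin spherical shell sits at the level of deep number-theoretic conjectures (analogues of Hypothesis K and of the Gauss circle problem for large $n$). Consequently, an unconditional proof of Conjecture~\ref{resolvent conjecture} appears beyond current technology, and a more realistic short-term goal is an incremental improvement of the exponent $-\tfrac{1}{3}$ in Theorem~\ref{resolvent theorem}, by combining recent advances in restriction theory (e.g.\ polynomial partitioning) with sharper Weyl-type counts of lattice points in thin spherical shells.
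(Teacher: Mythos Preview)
The statement you were asked to prove is labelled a \emph{Conjecture} in the paper, and indeed the paper does not prove it; the author explicitly presents $\mathcal{R}_{\mathrm{opt}}$ as the conjectured optimal region and only establishes the weaker Theorems~\ref{resolvent theorem} and~\ref{refined resolvent theorem}. So there is no ``paper's own proof'' to compare against.

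Your proposal is not a proof either, and you are right that it cannot be: you correctly isolate the two obstructions. First, via Theorem~\ref{equivalence theorem} the resolvent conjecture at $\alpha = 1-\varepsilon$ is equivalent to the spectral projection bound \eqref{uniform projection equation} at that exponent, and the paper itself observes (in the subsection on discrete Fourier restriction) that this would imply Conjecture~\ref{discrete restriction conjecture}, the discrete restriction conjecture for the sphere at the critical exponent $\tfrac{2n}{n-2}$, which is open. Your identification of the gap between $\tfrac{2(n+1)}{n-1}$ (the decoupling endpoint) and $\tfrac{2n}{n-2}$ as the essential difficulty is exactly this obstruction. Second, your remark that the required lattice-point input sits at the level of deep number-theoretic conjectures is consistent with the paper's use of Hlawka-type methods and M\"uller's Weyl sum bounds, which are far from sharp enough to reach $\alpha = 1$.

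One small correction to your outline: the paper's actual mechanism does not interpolate decoupling directly against a lattice-point count; rather it interpolates the $L^{p_0'}\to L^{p_0}$ bound from Corollary~\ref{decoupling corollary 2} against an $L^1\to L^\infty$ kernel bound obtained via Poisson summation and stationary phase (see \eqref{spectral proof 2}--\eqref{spectral proof 13}). Your suggestion to seek incremental improvements via sharper exponential sum estimates is precisely what the paper does in \S\ref{Weyl sum section} to pass from $-\tfrac{1}{3}$ to $-\beta_n$.
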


 A slightly larger region, given by taking $\varepsilon = 0$ in the definition of  $\mathcal{R}_{\mathrm{opt}}$, featured in the original question posed in~\cite{DKS}. Conjecture~\ref{resolvent conjecture} is closely related to the so-called \textit{discrete restriction conjecture} for the sphere studied in~\cite{Bourgain1993}, which partially motivates the above definition of $\mathcal{R}_{\mathrm{opt}}$; this connection is discussed in more detail in \S\ref{spectral projector section} below.

The proof of Theorem~\ref{resolvent theorem} follows the strategy of~\cite{BSSY} but takes advantage of new estimates available due to the Bourgain--Demeter $\ell^2$-decoupling theorem~\cite{BD2015}. In~\cite{BSSY} uniform resolvent estimates were shown to be equivalent to $L^{\frac{2n}{n+2}} \to L^{\frac{2n}{n-2}}$ bounds for certain spectral projectors with thin bandwidths; the precise details of this equivalence are recalled in \S\ref{spectral projector section}. The desired spectral projection bounds are then proved using the $\ell^2$-decoupling inequality. It is not surprising that decoupling should play a r\^ole here since it has already had numerous applications to the spectral theory of $\Delta_{\T^n}$~\cite{Bourgain2013, BD2015, BSSY}. 

The Bourgain--Demeter theorem yields an $L^{\frac{2(n+1)}{n+3}} \to L^{\frac{2(n+1)}{n-1}}$ bound for the projector; see Corollary~\ref{decoupling corollary 1} below. Roughly speaking, to obtain the desired $L^{\frac{2n}{n+2}} \to L^{\frac{2n}{n-2}}$ inequality, one interpolates the $L^{\frac{2(n+1)}{n+3}} \to L^{\frac{2(n+1)}{n-1}}$ estimate with an $L^1 \to L^{\infty}$ estimate. The $L^{\infty}$ bound for the projector follows from a pointwise estimate for the kernel which, as in~\cite{BSSY}, is established using the classical lattice point counting method of Hlawka~\cite{Hlawka1950} (see also~\cite[Chapter 1]{Sogge2017}).   

Hlawka's original argument~\cite{Hlawka1950} has been refined by numerous authors (see, for instance,~\cite{KN1991, KN1992, Muller1999, Guo2012}). In~\cite{BSSY} exponential sum bounds from~\cite{KN1992} were applied to yield the slightly improved exponent $\varepsilon_3 = \frac{85}{252}$ mentioned above. Similarly, by applying a more refined analysis involving the multidimensional Weyl sum estimates from~\cite{Muller1999}, it is possible to slightly extend $\mathcal{R}_{\mathrm{new}}$ in all dimensions.

\begin{theorem}\label{refined resolvent theorem} For $n \geq 3$ and all $\varepsilon > 0$ the result of Theorem~\ref{resolvent theorem} holds for 
\begin{equation*}
\mathcal{R}_{\mathrm{new}}' := \big\{ z = (\lambda + i \mu)^2  \in \C : \lambda, \mu \in \R, \lambda \geq 1, \, |\mu| \geq \lambda^{-\beta_n + \varepsilon} \big\}
\end{equation*}
where
\begin{equation*}
    \beta_n := \frac{1}{3} + \frac{n}{3} \cdot \frac{1}{21n^2 - n - 24}.
\end{equation*}
\end{theorem}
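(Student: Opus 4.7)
The plan is to follow exactly the scheme used to prove Theorem~\ref{resolvent theorem}, improving only the $L^{\infty}$ kernel estimate by invoking the multidimensional Weyl sum bounds of M\"uller~\cite{Muller1999} in place of the weaker exponential sum input coming from Hlawka's classical lattice point counting argument. Via the equivalence between uniform resolvent estimates and spectral projection bounds recalled in \S\ref{spectral projector section}, the matter reduces to controlling a spectral projector $\chi_{\lambda,\delta}$ onto eigenvalues of $-\Delta_{\T^n}$ in an interval of length $\delta \sim \lambda|\mu|$ centred at $\lambda^2$. As in the proof of Theorem~\ref{resolvent theorem}, one interpolates two bounds on $\chi_{\lambda,\delta}$: the $L^{2(n+1)/(n+3)} \to L^{2(n+1)/(n-1)}$ bound from Corollary~\ref{decoupling corollary 1} (which is unchanged), and an $L^1 \to L^{\infty}$ bound on the kernel (which is the quantity being sharpened).

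For the kernel estimate, I would write
\begin{equation*}
    K_{\lambda,\delta}(x) \;=\; \sum_{k \in \Z^n} \chi\!\left(\tfrac{|k|^2 - \lambda^2}{\delta}\right) e^{2\pi i k \cdot x}
\end{equation*}
for a smooth bump $\chi$, apply Poisson summation in $k$, and use stationary phase on each resulting oscillatory integral to obtain a representation of $K_{\lambda,\delta}$ as a weighted sum over $\ell \in \Z^n$ with phase essentially $\lambda|\ell|$. The classical treatment in~\cite{BSSY} proceeds by dyadic decomposition in $|\ell|$ and a trivial or van der Corput-type estimate at each scale, ultimately yielding the $\beta_n = 1/3$ endpoint. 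Replacing the per-scale estimate by the appropriate case of M\"uller's bound~\cite{Muller1999} extracts an additional $\lambda^{-\eta_n}$ saving over a non-trivial range of the dyadic parameter, where $\eta_n$ depends on $n$ through the numerology of the higher-dimensional Weyl differencing process in~\cite{Muller1999}.

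Plugging the improved $L^{\infty}$ bound into the same interpolation with Corollary~\ref{decoupling corollary 1} then produces the refined threshold $|\mu| \geq \lambda^{-\beta_n + \varepsilon}$; the specific shape of $\beta_n$ with denominator $21n^2 - n - 24$ emerges from optimising the interpolation exponent against the scale-dependent Weyl saving furnished by~\cite{Muller1999}, together with the stationary phase losses. The main obstacle, beyond checking that M\"uller's hypotheses apply uniformly to the family of exponential sums at the different dyadic scales, is carrying out this final optimisation cleanly: one must juggle the stationary phase error, the Weyl saving, the range of $\delta$ permitted by the spectral projection reduction, and the interpolation weights simultaneously, and it is this bookkeeping (rather than any genuinely new idea) that forces the stated value of $\beta_n$. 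No further input beyond M\"uller's bounds and the framework established for Theorem~\ref{resolvent theorem} should be required.
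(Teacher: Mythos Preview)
Your proposal is correct and follows essentially the same route as the paper: reduce via Theorem~\ref{equivalence theorem} to a spectral projection bound, keep the $L^{p_0'} \to L^{p_0}$ input from decoupling unchanged, and sharpen the $L^1 \to L^\infty$ kernel estimate by applying M\"uller's multidimensional Weyl sum bound~\cite{Muller1999} (after Poisson summation, stationary phase, and dyadic localisation) in place of the crude Lemma~\ref{crude kernel estimate}. The two points you leave implicit---that M\"uller's Hessian hypothesis only holds after a local linear change of coordinates (the paper's Lemma~\ref{preparation lemma}), and that the denominator $21n^2 - n - 24$ arises specifically from taking the differencing parameter $q = 3$ in M\"uller's scheme and then checking this is optimal among all admissible $q$---are exactly the bookkeeping you anticipate, and no further ideas are needed.
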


Taking $n = 3$ the exponent becomes $\beta_3 = \frac{55}{162}$ which is slightly larger than the previous best exponent $\varepsilon_3 = \frac{85}{252}$ from~\cite{BSSY}. This improvement for $n=3$ is due in part to the use of stronger multidimensional Weyl sum estimates from~\cite{Muller1999} (as opposed to the estimates of~\cite{KN1992} used in~\cite{BSSY}) and also due in part to the use of the $\ell^2$-decoupling inequality, which allows for greater leverage of the exponential sum bounds.  

This article is structured as follows:
\begin{itemize}
    \item In \S\ref{spectral projector section} preliminary results from~\cite{BSSY} and, in particular, the details of the equivalence between resolvent and spectral projection estimates, are reviewed.
    \item In \S\ref{proof section} spectral projection bounds are proven, following the scheme described above. Using the equivalence discussed in \S\ref{spectral projector section}, this provides the proof of Theorem~\ref{resolvent theorem}.
    \item In \S\ref{Weyl sum section} exponential sum estimates from~\cite{Muller1999} are applied to refine the argument from \S\ref{proof section}, yielding Theorem~\ref{refined resolvent theorem}.
\end{itemize}

\begin{notation}  Given positive numbers $A, B \geq 0$ and a list of objects $L$, the notation $A \lesssim_L B$, $B \gtrsim_L A$ or $A = O_L(B)$ signifies that $A \leq C_L B$ where $C_L$ is a constant which depends only on the objects in the list and the dimension $n$. Furthermore, $A \sim_L B$ signifies that $A \lesssim_L B$ and $B \lesssim_L A$. 
\end{notation}

\begin{acknowledgement} The author is indebted to Christopher D. Sogge both for suggesting the problem and for providing a number of helpful comments regarding the presentation. This research was partly carried out during a visit to the Institute of Applied Physics and Computational Mathematics, Beijing, in June 2019 and the author would like to thank Changxing Miao for his kind hospitality. 
\end{acknowledgement}



\section{Spectral projections}\label{spectral projector section} 




\subsection*{An equivalent formulation} It was shown in~\cite{BSSY} that the desired resolvent estimates are equivalent to certain spectral projection bounds. Given $\lambda \geq 1$ and $\rho > 0$, define 
\begin{equation*}
    A(\lambda, \rho) := \big\{ \xi \in \hat{\R}^n : \big||\xi| - \lambda\big| < \rho \big\}.
\end{equation*}
In the case of the torus,~\cite[Theorem 1.3]{BSSY} implies the following. 

\begin{theorem}[\cite{BSSY}]\label{equivalence theorem} Given $n \geq 3$ and $0< \alpha \leq 1$, the following are equivalent:
\begin{enumerate}[i)]
    \item For all $\lambda \geq 1$ there is a uniform spectral projection estimate
    \begin{equation}\label{uniform projection equation}
        \Big\| \sum_{k \in \Z^n \cap A(\lambda, \lambda^{-\alpha})} \hat{f}(k) e^{2 \pi i x \cdot k} \Big\|_{L^{\frac{2n}{n-2}}(\T^n)} \lesssim_{\alpha} \lambda^{1 -\alpha} \|f\|_{L^{\frac{2n}{n+2}}(\T^n)}.
    \end{equation}
    \item There is a uniform resolvent estimate
    \begin{equation*}
          \|u\|_{L^{\frac{2n}{n-2}}(\T^n)} \lesssim_{\alpha} \| (\Delta_{\T^n} + z)u\|_{L^{\frac{2n}{n+2}}(\T^n)}
    \end{equation*}
    for all $z = (\lambda + i \mu)^2 \in \C$ such that $\lambda, \mu \in \R$ satisfy $\lambda \geq 1$, $|\mu| \geq \lambda^{-\alpha}$.
\end{enumerate}
\end{theorem}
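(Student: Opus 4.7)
The proof exploits the fact that both $R(z) := (\Delta_{\T^n}+z)^{-1}$ and the spectral projector $P_{A(\lambda,\rho)}$ act as Fourier multipliers on $\T^n$, with symbols $m_z(\xi) = (z - 4\pi^2|\xi|^2)^{-1}$ and $\chi_{A(\lambda,\rho)}(\xi)$ respectively, under the normalisation $-\Delta_{\T^n} e^{2\pi i k\cdot x} = 4\pi^2|k|^2\, e^{2\pi i k\cdot x}$.

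For (ii) $\Rightarrow$ (i): Let $f$ have Fourier support in $A := A(\lambda,\lambda^{-\alpha})$; set $\mu := \lambda^{-\alpha}$ and $z := (\lambda+i\mu)^2$. On $A$ one has
\[
\bigl|z - 4\pi^2|k|^2\bigr| = \bigl|\lambda+i\mu - 2\pi|k|\bigr|\cdot\bigl|\lambda+i\mu+2\pi|k|\bigr| \sim \lambda\cdot\lambda^{-\alpha} = \lambda^{1-\alpha},
\]
so the function $g := (\Delta_{\T^n}+z)f$ obeys $|\widehat g(k)|\sim \lambda^{1-\alpha}|\widehat f(k)|$. After smoothing $\chi_A$, the symbol $(z - 4\pi^2|\xi|^2)\chi_A(\xi)/\lambda^{1-\alpha}$ satisfies Hörmander--Mikhlin bounds at scale $\lambda^{-\alpha}$ (size $O(1)$ and $m$-th derivative $O(\lambda^{m\alpha})$), hence defines an $L^{2n/(n+2)}$-bounded multiplier of norm $O(1)$. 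This gives $\|g\|_{2n/(n+2)}\lesssim \lambda^{1-\alpha}\|f\|_{2n/(n+2)}$, and combined with (ii) applied to $u := f$ this yields (i).

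For (i) $\Rightarrow$ (ii): Introduce a smooth dyadic partition of unity $\{\phi_j\}_{j\geq 0}\cup\{\phi_\infty\}$ with $\phi_j$ adapted to the shell $\{||\xi|-\lambda|\sim 2^j|\mu|\}$ for $0\leq j\lesssim \log(\lambda/|\mu|)$ and $\phi_\infty$ adapted to $\{|\xi|\gg \lambda\}$. On the support of $\phi_j$, $|m_z(\xi)|\sim (2^j\lambda|\mu|)^{-1}$, and the smooth projector $T_{\phi_j}$ onto this shell is bounded $L^{2n/(n+2)}\to L^{2n/(n-2)}$ by $\lesssim 2^j|\mu|\lambda$, obtained by subdividing into $O(2^j|\mu|\lambda^\alpha)$ sub-annuli of width $\lambda^{-\alpha}$ and applying (i) to each. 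Each dyadic piece of $R(z)$ thus has operator norm $O(1)$, while the $\phi_\infty$ piece is controlled by the Hardy--Littlewood--Sobolev inequality since $|m_z(\xi)|\lesssim |\xi|^{-2}$ there. The main obstacle is that naively summing the $O(\log \lambda)$ dyadic pieces via the triangle inequality loses a logarithmic factor; following \cite[\S 2]{BSSY}, this is resolved by treating $\{m_z\phi_j\}_j$ as a single Hörmander-type multiplier whose derivative bounds at each scale are verified using (i), so that the smooth radial structure of $m_z$ absorbs the logarithm and gives a uniform $L^{2n/(n+2)}\to L^{2n/(n-2)}$ bound of $O(1)$.
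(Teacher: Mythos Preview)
The paper does not supply a proof of this statement; it is quoted verbatim from \cite[Theorem~1.3]{BSSY}, so there is no in-paper argument to compare your attempt against.

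Assessed on its own merits, your sketch of (ii)$\Rightarrow$(i) has a genuine gap. You assert that the smoothed symbol $(z-4\pi^2|\xi|^2)\chi_A(\xi)/\lambda^{1-\alpha}$, having size $O(1)$ and $m$-th derivative $O(\lambda^{m\alpha})$, is an $L^{2n/(n+2)}$ Fourier multiplier of norm $O(1)$. This is not what the H\"ormander--Mikhlin theorem gives: that theorem requires $|\xi|^{|\gamma|}|\partial^{\gamma} m(\xi)|\lesssim 1$, which here would force $\lambda^{|\gamma|(1+\alpha)}\lesssim 1$, patently false. No standard multiplier theorem produces an $O(1)$ bound from derivative control at scale $\lambda^{-\alpha}$ on an annulus of radius $\lambda$; indeed a smooth radial cutoff to such an annulus is \emph{not} an $L^p$-multiplier of uniformly bounded norm for $p\ne 2$ (its convolution kernel, computed later in the paper at \eqref{spectral proof 13}, already has $L^1$ norm of order $\lambda^{(1+\alpha)(n-1)/2}$, and the $L^p$ multiplier norm is genuinely large as well --- this is essentially a Bochner--Riesz phenomenon). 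There is also a normalisation slip: with $A(\lambda,\rho)=\{\,||\xi|-\lambda|<\rho\,\}$ as in the paper and eigenvalues $4\pi^2|k|^2$, one has $|\lambda+i\mu-2\pi|k||\sim\lambda$ on $A$, not $\sim\lambda^{-\alpha}$.

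A standard route that does work for (ii)$\Rightarrow$(i), and which avoids any $L^p\to L^p$ multiplier step, is to reduce (i) by $T^*T$ to $\|\chi_A(D)f\|_{2}\lesssim(\lambda^{1-\alpha})^{1/2}\|f\|_{2n/(n+2)}$, take $\mu=\lambda^{-\alpha}$, and use that $\mathrm{Im}\,m_z(k)$ is sign-definite with $|\mathrm{Im}\,m_z(k)|\gtrsim(\lambda\mu)^{-1}$ on $A$:
\[
\|\chi_A(D)f\|_2^2\;\lesssim\;\lambda\mu\,\bigl|\langle\mathrm{Im}\,R(z)f,\,f\rangle\bigr|\;\le\;\lambda\mu\,\|R(z)f\|_{\frac{2n}{n-2}}\|f\|_{\frac{2n}{n+2}}\;\lesssim\;\lambda^{1-\alpha}\|f\|_{\frac{2n}{n+2}}^2.
\]
For (i)$\Rightarrow$(ii) your outline points in the right direction, but the only nontrivial step --- removing the $\log\lambda$ loss from the dyadic-shell sum --- is deferred wholesale to \cite{BSSY} with a vague description; the actual mechanism in \cite{BSSY} is a local/global splitting in which the local piece is handled by a Hadamard parametrix (comparable to the Euclidean resolvent of \cite{KRS}) and the remainder sums with geometric decay over the shells, rather than a single H\"ormander-type multiplier argument.
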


\begin{remark} In~\cite{BSSY} a more general statement is proven for compact manifolds.
\end{remark}

The remaining sections of this paper will focus on proving spectral projection bounds of the type featured above. 




\subsection*{Relationship with discrete Fourier restriction} Although it will not play any r\^ole in later arguments, it is nevertheless instructive to remark that Theorem~\ref{equivalence theorem} relates the resolvent and discrete restriction conjectures. 

\begin{conjecture}[Discrete restriction conjecture~\cite{Bourgain1993}]\label{discrete restriction conjecture} For $n \geq 3$, $\lambda \geq 1$ and $\varepsilon > 0$,
\begin{equation}\label{discrete restriction equation}
        \Big\| \sum_{k \in \Z^n \cap \lambda S^{n-1}} \hat{f}(k) e^{2 \pi i x \cdot k} \Big\|_{L^{\frac{2n}{n-2}}(\T^n)} \lesssim_{\varepsilon} \lambda^{\varepsilon} \|f\|_{L^{2}(\T^n)}.
    \end{equation}
\end{conjecture}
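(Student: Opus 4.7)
The inequality in Conjecture~\ref{discrete restriction conjecture} is a celebrated open problem of Bourgain, so any honest proposal must be framed as a partial-progress strategy rather than a complete proof. The natural starting point is the Bourgain--Demeter $\ell^2$-decoupling theorem, which, when applied to functions whose Fourier support lies in $\Z^n \cap \lambda S^{n-1}$, immediately yields the estimate at the Stein--Tomas exponent $p_{\mathrm{ST}} := \tfrac{2(n+1)}{n-1}$ with an $\varepsilon$-loss. A short computation shows that $\tfrac{2n}{n-2} > p_{\mathrm{ST}}$ whenever $n \geq 3$, so the conjectured exponent lies strictly past the decoupling-critical point, and closing this gap is the whole of the difficulty.

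My plan would be to run the interpolation scheme that underlies the thin-annulus variant in \S\ref{proof section} and \S\ref{Weyl sum section}: combine the decoupling bound at $p_{\mathrm{ST}}$ with a pointwise kernel estimate for the spectral projector onto a neighbourhood of $\lambda S^{n-1}$ of width $\rho = \lambda^{-1+\varepsilon}$, the latter obtained from multidimensional Weyl-sum bounds in the tradition of Hlawka--Kr\"atzel--Nowak--M\"uller. By Theorem~\ref{equivalence theorem} applied with $\alpha = 1 - \varepsilon$, this ultra-thin spectral projection estimate is the equivalent reformulation of the conjecture. A first crude attempt replaces the kernel step by the trivial bound $\|f\|_{L^\infty} \leq N_\lambda^{1/2} \|a\|_{\ell^2}$, where $N_\lambda := \#(\Z^n \cap \lambda S^{n-1}) \lesssim_\varepsilon \lambda^{n-2+\varepsilon}$; interpolation with the decoupling estimate at $p_{\mathrm{ST}}$ then produces, at $p = \tfrac{2n}{n-2}$, a loss of order $\lambda^{(n-2)/(n(n-1))+\varepsilon}$, far from the target $\lambda^\varepsilon$ but indicative of the correct shape of the argument.

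The main obstacle is structural rather than technical. On an annulus of width $\lambda^{-1+\varepsilon}$ the lattice points are so sparse that the spectral projector effectively behaves as a square-root-cancellation object over $N_\lambda \sim \lambda^{n-2+o(1)}$ exponentials, and $\ell^2$-decoupling is known to extract this cancellation sharply only up to $p_{\mathrm{ST}}$. Pushing any further requires genuinely new arithmetic input on the spatial distribution of $\Z^n \cap \lambda S^{n-1}$, substantially beyond what current Weyl-sum estimates provide. Accordingly, I expect a plan along these lines to reach only an estimate of the form $\lambda^{\beta(n)-\varepsilon}$ with some $\beta(n) > 0$, paralleling the loss recorded in Theorem~\ref{refined resolvent theorem}, rather than the full Conjecture~\ref{discrete restriction conjecture}.
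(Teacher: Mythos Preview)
Your assessment is correct in the most important respect: the statement is a \emph{conjecture}, and the paper does not prove it. There is therefore no ``paper's own proof'' to compare against. The paper merely records Conjecture~\ref{discrete restriction conjecture} as an open problem, notes that partial results at larger Lebesgue exponents follow from $\ell^2$-decoupling, and observes (via Theorem~\ref{equivalence theorem} with $\alpha = 1 - \varepsilon$) that it would follow from the resolvent conjecture (Conjecture~\ref{resolvent conjecture}).

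Your proposal is well-calibrated to this situation. You correctly identify that decoupling gives the estimate at $p_{\mathrm{ST}} = \tfrac{2(n+1)}{n-1}$, that $\tfrac{2n}{n-2} > p_{\mathrm{ST}}$ for $n \geq 3$, and that the interpolation scheme of \S\ref{proof section}--\S\ref{Weyl sum section} applied at bandwidth $\rho = \lambda^{-1+\varepsilon}$ is exactly the relevant reformulation. Your diagnosis of the obstruction---that the Weyl-sum/kernel bounds available are far too weak at width $\lambda^{-1+\varepsilon}$, and that genuinely new arithmetic input on the distribution of $\Z^n \cap \lambda S^{n-1}$ would be needed---is consistent with the paper's own framing and with the fact that Theorem~\ref{refined resolvent theorem} only reaches $\rho \sim \lambda^{-1/3 - o(1)}$. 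There is no gap to name here; you have accurately described both the state of the problem and why the methods of the paper do not resolve it.
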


In particular, if $e_{\lambda}$ is an $L^2$-normalised eigenfunction for $-\Delta_{\T}$ with eigenvalue $\lambda^2$, then Conjecture~\ref{discrete restriction conjecture} implies that $\|e_{\lambda}\|_{L^{2n/(n-2)}(\T^n)} \lesssim_{\varepsilon} \lambda^{\varepsilon}$. Various partial results on this problem are known, establishing weaker versions of \eqref{discrete restriction equation} with larger values of $p$ on the left-hand side: see~\cite{Bourgain1993, Bourgain1997, BD2013,BD2015a, BD2015}.

By elementary separation properties of concentric lattice spheres, \eqref{discrete restriction equation} is equivalent to 
 \begin{equation*}
        \Big\| \sum_{k \in \Z^n \cap A(\lambda, \lambda^{-1})} \hat{f}(k) e^{2 \pi i x \cdot k} \Big\|_{L^{\frac{2n}{n-2}}(\T^n)} \lesssim_{\varepsilon} \lambda^{\varepsilon} \|f\|_{L^{2}(\T^n)}.
    \end{equation*}
It is not difficult to see, using a $T^*T$ argument, that the above estimate would follow from \eqref{uniform projection equation} with $\alpha = 1 - \varepsilon$. Thus, by Theorem~\ref{equivalence theorem}, the resolvent conjecture (Conjecture~\ref{resolvent conjecture}) implies the discrete restriction conjecture (Conjecture~\ref{discrete restriction conjecture}).




\section{The proof of Theorem~\ref{resolvent theorem}}\label{proof section}

By Theorem~\ref{equivalence theorem}, the uniform resolvent estimates in Theorem~\ref{resolvent theorem} are equivalent to the following spectral projection bounds. 

\begin{proposition}\label{spectral proposition} Let $n \geq 3$, $\lambda \geq 1$ and $\varepsilon >0$. If $\rho := \lambda^{-1/3 + \varepsilon}$, then
\begin{equation}\label{spectral inequality}
    \Big\| \sum_{k \in \Z^n \cap A(\lambda, \rho)} \hat{f}(k) e^{2 \pi i x \cdot k} \Big\|_{L^{\frac{2n}{n-2}}(\T^n)} \lesssim_{\varepsilon} \rho \lambda \|f\|_{L^{\frac{2n}{n+2}}(\T^n)}.
\end{equation}
\end{proposition}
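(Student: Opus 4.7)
The operator on the left-hand side of~\eqref{spectral inequality}, which I shall denote by $T_\lambda$, is convolution on $\T^n$ with kernel
\[
K_\lambda(x) := \sum_{k \in \Z^n \cap A(\lambda,\rho)} e^{2\pi i x\cdot k}.
\]
The plan, as outlined in the introduction, is to sandwich the required $L^{2n/(n+2)} \to L^{2n/(n-2)}$ bound between two endpoint estimates on the duality line $1/p+1/q = 1$, and then to interpolate. At one end sits the Stein--Tomas-type bound furnished by the Bourgain--Demeter $\ell^2$-decoupling theorem,
\[
\|T_\lambda f\|_{L^{2(n+1)/(n-1)}(\T^n)} \lesssim_\varepsilon \rho\,\lambda^{(n-1)/(n+1)+\varepsilon}\,\|f\|_{L^{2(n+1)/(n+3)}(\T^n)},
\]
recorded as Corollary~\ref{decoupling corollary 1}. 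Its proof transfers decoupling for the $\rho$-neighbourhood of $\lambda S^{n-1}$ from $\R^n$ to $\T^n$ via a bump adapted to a fundamental domain; the linear $\rho$-dependence arises naturally once one packages the estimate through the $T^*T$ identity $T_\lambda = S^*S$ for the discrete Fourier restriction $S: f \mapsto \hat f|_{\Z^n \cap A(\lambda,\rho)}$, so that $\|T_\lambda\|_{L^{q'}\to L^q} = \|S\|_{L^{q'}\to\ell^2}^2$ and each factor of $S$ contributes a $\rho^{1/2}$.

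At the other end is the trivial $L^1 \to L^\infty$ endpoint. Since $T_\lambda f = K_\lambda * f$,
\[
\|T_\lambda\|_{L^1\to L^\infty} = \|K_\lambda\|_{L^\infty(\T^n)} = K_\lambda(0) = |\Z^n \cap A(\lambda,\rho)|,
\]
so the task reduces to a lattice point count. The plan is to apply Hlawka's classical method: Poisson summation against a smoothed indicator of $A(\lambda,\rho)$, followed by the stationary-phase expansion of the Fourier transform of the annulus, reduces matters to a multidimensional Weyl sum over $\Z^n\setminus\{0\}$. The standard van~der~Corput / $B$-process bound of order $O(\lambda^{2/3})$ then delivers
\[
|\Z^n \cap A(\lambda,\rho)| \lesssim \rho\,\lambda^{n-1} + \lambda^{n-4/3},
\]
with the volume main term dominating exactly when $\rho \geq \lambda^{-1/3 + \varepsilon}$; in that range $\|K_\lambda\|_{L^\infty(\T^n)} \lesssim \rho\,\lambda^{n-1}$.

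Interpolating the two endpoints by Riesz--Thorin with parameter $\theta = \tfrac{(n-2)(n+1)}{n(n-1)}$ — chosen so that $(1-\theta)\cdot 1 + \theta\cdot\tfrac{n+3}{2(n+1)} = \tfrac{n+2}{2n}$ — places the desired $L^{2n/(n+2)} \to L^{2n/(n-2)}$ operator norm at $A^\theta B^{1-\theta}$, where $A$ and $B$ are the two endpoint constants. Using $1-\theta = \tfrac{2}{n(n-1)}$, the exponent of $\rho$ works out to $\theta + (1-\theta) = 1$ while that of $\lambda$ collapses (via $\theta n/(n+1) = (n-2)/(n-1)$) to $(n-1)[1 - \theta n/(n+1)] = 1$, yielding $A^\theta B^{1-\theta} \lesssim_\varepsilon \rho\,\lambda^{1+\varepsilon}$ and hence~\eqref{spectral inequality} after reindexing $\varepsilon$.

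The main obstacle is really the lattice point count, not the decoupling step: it is the exponent $2/3$ in the van~der~Corput error bound that pins down the threshold $\rho = \lambda^{-1/3+\varepsilon}$. The role of the $\ell^2$-decoupling inequality is merely to sharpen the Stein--Tomas endpoint enough that the interpolation exponents balance cleanly; replacing the van~der~Corput input by the sharper multidimensional Weyl sum estimates of Müller is what drives the refinement to $\mathcal{R}_{\mathrm{new}}'$ carried out in Theorem~\ref{refined resolvent theorem}.
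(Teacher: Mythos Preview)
The decoupling endpoint you state is incorrect. Corollary~\ref{decoupling corollary 1} gives
\[
\|T_\lambda\|_{L^{2(n+1)/(n+3)}\to L^{2(n+1)/(n-1)}} \lesssim_\varepsilon \lambda^\varepsilon (\rho\lambda)^{(n-1)/(n+1)},
\]
with $\rho^{(n-1)/(n+1)}$, not the linear $\rho$ you claim; your version is in fact false, since testing on a single exponential forces the operator norm to be at least $1$, whereas your bound gives $\lambda^{-2/(n+1)}<1$ at $\rho = \lambda^{-1}$. (The $T^*T$ bookkeeping slips: each restriction factor contributes $(\rho\lambda)^{(n-1)/(2(n+1))}$, not $\rho^{1/2}$.) With the correct constant the interpolation no longer closes: the $\rho$-exponent becomes $\theta(n-1)/(n+1) + (1-\theta) = 1 - 2(n-2)/(n(n-1)) < 1$, so one obtains only $\rho^{1-2(n-2)/(n(n-1))}\lambda$, strictly weaker than the required $\rho\lambda$.

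The paper repairs this by splitting $m^{\lambda,\rho} = m_0^{\lambda,\rho} + m_1^{\lambda,\rho}$ with $m_0 = m^{\lambda,\rho}*\eta$ for a Schwartz function whose inverse transform equals $1$ near the origin. The smoothed piece $m_0$ has $\|m_0\|_{\ell^\infty}\lesssim\rho$ and is essentially supported in a unit-width annulus, so the Stein--Tomas multiplier bound (Corollary~\ref{Stein--Tomas corollary 2}) delivers the linear-in-$\rho$ endpoint $\rho\lambda^{(n-1)/(n+1)}$ you were hoping for, and interpolation against $\|m_0(D)\|_{1\to\infty}\lesssim\rho\lambda^{n-1}$ then yields exactly $\rho\lambda$. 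For the remainder $m_1$, decoupling is genuinely used, but the $L^1\to L^\infty$ input is \emph{not} a lattice count: Poisson summation plus stationary phase shows the kernel of $m_1(D)$ obeys the much stronger pointwise bound $(\lambda/\rho)^{(n-1)/2}$ (Lemma~\ref{crude kernel estimate}), and interpolating this against the correct decoupling constant produces $\lambda^\varepsilon\rho^{1-3/n}\lambda^{1-1/n}$. The threshold $\rho = \lambda^{-1/3}$ emerges by balancing the two pieces, not from a van~der~Corput error term in a lattice-point count as you suggest.
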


Given $m \in \ell^{\infty}(\Z^n)$ let $m(D)$ denote the associated Fourier multiplier operator, defined initially on $C^{\infty}(\T^n)$ by 
\begin{equation*}
    m(D)f(x) := \sum_{k \in \Z^n}  m(k) \hat{f}(k)e^{2 \pi i x \cdot k}.
\end{equation*}
If $m \in L^{\infty}(\hat{\R}^n)$, then $m(D) := m|_{\Z^n}(D)$ where $m|_{\Z^n}$ denotes the restriction of $m$ to the integer lattice. Thus, with this notation, one may write~\eqref{spectral inequality} as
\begin{equation}\label{spectral inequality 2}
    \| \chi_{A(\lambda, \rho)}(D) f\|_{L^{\frac{2n}{n-2}}(\T^n)} \lesssim_{\varepsilon} \rho \lambda  \|f\|_{L^{\frac{2n}{n+2}}(\T^n)}.
\end{equation}

The remainder of this section deals with the proof of Proposition~\ref{spectral proposition}.




\subsection*{Smooth multipliers} In proving Proposition~\ref{spectral proposition}, one may replace the rough cutoff function $\chi_{A(\lambda, \rho)}$ with a smoothed out version. Indeed, by $T^*T$,~\eqref{spectral inequality 2} is equivalent to
\begin{equation}\label{spectral 1}
      \| \chi_{A(\lambda, \rho)}(D)f\|_{L^{2}(\T^n)} \lesssim_{\varepsilon} (\rho\lambda)^{1/2} \|f\|_{L^{\frac{2n}{n+2}}(\T^n)}.
\end{equation}
Fix $\beta \in C_c^{\infty}(\R)$ non-negative with $\beta(r) = 1$ for $|r| \leq 1$ and $\beta(r) = 0$ for $|r| \geq 2$ and define the multiplier
\begin{equation}\label{smooth multiplier}
    m^{\lambda,\rho}(\xi) := \beta\big(\rho^{-1}(|\xi| - \lambda)\big).
\end{equation}
By $L^2$-orthogonality,~\eqref{spectral 1} would follow from the bound
\begin{equation*}
      \big\|m^{\lambda,\rho}(D)^{1/2} f \big\|_{L^{2}(\T^n)} \lesssim_{\varepsilon}  (\rho\lambda)^{1/2} \|f\|_{L^{\frac{2n}{n+2}}(\T^n)}
\end{equation*}
and, by a second application of $T^*T$, this would further follow from
\begin{equation}\label{spectral 2} 
\| m^{\lambda,\rho}(D) f\|_{L^{\frac{2n}{n-2}}(\T^n)} \lesssim_{\varepsilon} \rho \lambda  \|f\|_{L^{\frac{2n}{n+2}}(\T^n)}.
\end{equation}




\subsection*{Consequences of $\ell^2$-decoupling} The proof of Proposition~\ref{spectral proposition} relies on the $\ell^2$-decoupling theorem proved in~\cite{BD2015}. It is convenient to work with a rescaled version of the decoupling theorem, in the special case of the euclidean sphere. For $\lambda \geq 1$ and $g \in L^1(\lambda S^{n-1})$ let
\begin{equation*}
    E_{\lambda} g(x) := \int_{\lambda S^{n-1}} g(\omega) e^{2 \pi i x \cdot \omega} \,\ud \sigma_{\lambda S^{n-1}}(\omega), \qquad x \in \R^n,
\end{equation*}
where the integration is with respect to the normalised (to have unit mass) surface measure on $\lambda S^{n-1}$.

\begin{theorem}[Bourgain--Demeter~\cite{BD2015}]\label{decoupling theorem} Let $\lambda \gtrsim 1$, $1 \gtrsim \rho \geq \lambda^{-1}$ and $\Theta(\lambda, \rho)$ be a finitely-overlapping covering of $\lambda S^{n-1}$ by $(\rho\lambda)^{1/2}$-caps. Given $g \in L^1(S^{n-1})$ write $g_{\theta} := g \cdot \chi_{\theta}$. For all $\varepsilon > 0$,
\begin{equation*}
    \|E_{\lambda} g\|_{L^{\frac{2(n+1)}{n-1}}(B_{\rho^{-1}})} \lesssim_{\varepsilon} \lambda^{\varepsilon} \Big(\sum_{\theta \in \Theta(\lambda,\rho)} \|E_{\lambda} g_{\theta}\|_{L^{\frac{2(n+1)}{n-1}}(w_{B_{\rho^{-1}}})}^2\Big)^{\frac{1}{2}}.
\end{equation*}
\end{theorem}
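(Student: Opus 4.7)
The plan is to reduce this rescaled statement to the standard unit-scale form of the Bourgain--Demeter $\ell^2$-decoupling theorem via an affine change of variables. Recall that the standard form provides, for any $\delta \in (0,1]$ and $g \in L^1(S^{n-1})$, a decoupling of $E_1 g$ over $B_{\delta^{-1}}$ into contributions from $\delta^{1/2}$-caps of $S^{n-1}$, at the critical exponent $p_c = 2(n+1)/(n-1)$, with a loss of $\delta^{-\varepsilon}$ in the constant.

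First I would substitute $\omega = \lambda \eta$ in the definition of $E_\lambda g$ and observe that the Jacobian for the surface measures is $1$ since both $\ud\sigma_{\lambda S^{n-1}}$ and $\ud\sigma_{S^{n-1}}$ are normalised to unit mass. This yields $E_\lambda g(x) = E_1 \tilde g(\lambda x)$ with $\tilde g(\eta) := g(\lambda \eta)$. Setting $\delta := \rho/\lambda$, which lies in $[\lambda^{-2}, \lambda^{-1}]$ by hypothesis, a cap $\theta \in \Theta(\lambda,\rho)$ of radius $(\rho\lambda)^{1/2}$ on $\lambda S^{n-1}$ pulls back under $\eta \mapsto \lambda \eta$ to a $\delta^{1/2}$-cap $\tilde\theta := \lambda^{-1}\theta$ of $S^{n-1}$, while the change of variables $y = \lambda x$ sends $B_{\rho^{-1}}$ to $B_{\delta^{-1}}$. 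Applying the standard decoupling to $\tilde g$ over $B_{\delta^{-1}}$ and changing variables back in each $L^{p_c}$ norm produces the claimed inequality, since the Jacobian factors $\lambda^{n/p_c}$ appear identically on both sides and cancel. The remaining loss $\delta^{-\varepsilon} = (\rho/\lambda)^{-\varepsilon} \leq \lambda^{2\varepsilon}$ is absorbed by relabelling $\varepsilon$.

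The only genuine bookkeeping point is to verify that the Schwartz weight $w_{B_{\delta^{-1}}}(y)$ transforms under $y = \lambda x$ into a weight dominated by $w_{B_{\rho^{-1}}}(x)$; this is routine for the standard polynomial or Gaussian weights used in the formulation of decoupling. No deeper obstacle is expected, as the substantive content lies entirely in the cited unit-scale theorem from~\cite{BD2015}.
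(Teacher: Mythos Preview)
The paper does not give a proof of this statement: Theorem~\ref{decoupling theorem} is quoted directly from Bourgain--Demeter~\cite{BD2015} as a black box and is then applied in the proof of Corollary~\ref{decoupling corollary 1}. Your proposal---reducing the $\lambda S^{n-1}$ formulation to the unit-sphere decoupling theorem by the substitution $\omega = \lambda\eta$, $y = \lambda x$, $\delta = \rho/\lambda$---is exactly the standard and correct way to see that the rescaled statement is equivalent to the usual one, and your bookkeeping (cap radii $(\rho\lambda)^{1/2} \mapsto \delta^{1/2}$, ball radii $\rho^{-1} \mapsto \delta^{-1}$, cancellation of the Jacobian $\lambda^{n/p_c}$ on both sides, $\delta^{-\varepsilon} \le \lambda^{2\varepsilon}$) is accurate. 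For the weight, note that with the paper's definition $w_{B_r}(x) = (1+|x-c(B_r)|)^{-6N}$ one has, under $y = \lambda x$ with $\lambda \ge 1$,
\[
w_{B_{\delta^{-1}}}(\lambda x) = (1+\lambda|x-c|)^{-6N} \le (1+|x-c|)^{-6N} = w_{B_{\rho^{-1}}}(x),
\]
so the domination you flag as routine indeed holds.
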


Here $B_r$ is used to denote an \emph{$r$-ball}: that is, $B_r$ is a ball in $\R^n$ with (arbitrary) centre $c(B_r)$ and radius $r > 0$. The weight $w_{B_r}$ is the function concentrated on $B_r$ given by
\begin{equation}\label{weight}
    w_{B_r}(x) := \big(1 + |x - c(B_r)|)^{-6N}
\end{equation}
where $N := 100n$. Finally, an \emph{$r$-cap} on the sphere $\lambda S^{n-1}$ is the intersection of $\lambda S^{n-1}$ with an $r$-ball centred at a point on $\lambda S^{n-1}$. 

Using Theorem~\ref{decoupling theorem}, one may prove an $L^{\frac{2(n+1)}{n+3}}\to L^{\frac{2(n+1)}{n-1}}$ bound for the projector in~\eqref{spectral inequality}. 

\begin{corollary}\label{decoupling corollary 1} Let $n \geq 3$, $\lambda \gtrsim 1$ and $1 \gtrsim r \geq \lambda^{-1}$. For all $\varepsilon > 0$,
\begin{equation}\label{decoupling corollary equation}
    \big\| \chi_{A(\lambda, r)}(D)f \big\|_{L^{\frac{2(n+1)}{n-1}}(\T^n)} \lesssim_{\varepsilon} \lambda^{\varepsilon} (r\lambda)^{\frac{n-1}{n+1}} \|f\|_{L^{\frac{2(n+1)}{n+3}}(\T^n)}.
\end{equation}
\end{corollary}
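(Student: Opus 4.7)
The plan is to dualise via $T^*T$ and then apply the $\ell^2$-decoupling theorem cap-by-cap, combined with a simple Bernstein-type bound on each cap. Abbreviate $p := \tfrac{2(n+1)}{n-1}$ and $\Lambda := \Z^n \cap A(\lambda, r)$, and factor $T := \chi_{A(\lambda,r)}(D) = S S^*$, where $S : \ell^2(\Lambda) \to L^p(\T^n)$ is the discrete extension operator $S(a)(x) := \sum_{k \in \Lambda} a_k e^{2\pi i x \cdot k}$ and $S^*$ is Fourier restriction to $\Lambda$. Since $\|SS^*\|_{L^{p'} \to L^p} = \|S\|^2_{\ell^2 \to L^p}$, the corollary reduces to the extension bound
\begin{equation*}
\|S(a)\|_{L^p(\T^n)} \lesssim_\varepsilon \lambda^{\varepsilon} (r\lambda)^{\frac{n-1}{2(n+1)}} \|a\|_{\ell^2(\Lambda)}.
\end{equation*}

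Next, fix a finitely-overlapping cover $\Theta(\lambda, r)$ of $\lambda S^{n-1}$ by $(r\lambda)^{1/2}$-caps and, for each $\theta \in \Theta(\lambda,r)$, let $\widetilde\theta \subset A(\lambda, r)$ denote the slab of tangential width $(r\lambda)^{1/2}$ and radial thickness $r$ associated to $\theta$; set $S_\theta(a)(x) := \sum_{k \in \Lambda \cap \widetilde\theta} a_k e^{2\pi i x \cdot k}$, so that $S(a) = \sum_\theta S_\theta(a)$. Viewing $S(a)$ as a $\Z^n$-periodic function on $\R^n$, its Fourier transform is a combination of lattice $\delta$-spikes inside $A(\lambda, r)$; convolving with a Schwartz bump whose Fourier transform is supported in a ball of radius $\ll r$ smears each spike into an $O(r)$-neighbourhood of $\lambda S^{n-1}$, which places the resulting function in the hypothesis of the standard neighbourhood form of the Bourgain--Demeter theorem. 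Applying decoupling on a ball $B \subset \R^n$ of radius $r^{-1} \gtrsim 1$ that contains a fundamental domain of $\T^n$, and then invoking $\Z^n$-periodicity (so that the factor $r^{-n/p}$ coming from the $\sim (r^{-1})^n$ periods inside $B$ cancels on both sides, and the weight $w_B$ is absorbed into an $O(1)$ constant), transfers the decoupling inequality to the torus:
\begin{equation*}
\|S(a)\|_{L^p(\T^n)} \lesssim_\varepsilon \lambda^\varepsilon \Big(\sum_{\theta \in \Theta(\lambda, r)} \|S_\theta(a)\|_{L^p(\T^n)}^2\Big)^{1/2}.
\end{equation*}

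Each cap term is then handled by Bernstein. Parseval gives $\|S_\theta(a)\|_{L^2(\T^n)} = \|a\|_{\ell^2(\Lambda \cap \widetilde\theta)}$, while the triangle inequality followed by Cauchy--Schwarz gives $\|S_\theta(a)\|_{L^\infty(\T^n)} \leq N_\theta^{1/2} \|a\|_{\ell^2(\Lambda \cap \widetilde\theta)}$, with $N_\theta := |\Lambda \cap \widetilde\theta|$. Logarithmic interpolation, using $\tfrac{1}{2} - \tfrac{1}{p} = \tfrac{1}{n+1}$, yields $\|S_\theta(a)\|_{L^p(\T^n)} \leq N_\theta^{1/(n+1)} \|a\|_{\ell^2(\Lambda \cap \widetilde\theta)}$. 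Since $\widetilde\theta$ is a convex body of diameter $O((r\lambda)^{1/2})$ with volume $\sim r \cdot (r\lambda)^{(n-1)/2}$, the elementary estimate $|K \cap \Z^n| \lesssim \mathrm{vol}(K) + D^{n-1}$ for convex $K$ of diameter $D$, combined with $r \leq 1$, gives $N_\theta \lesssim (r\lambda)^{(n-1)/2}$ uniformly in $\theta$, whence $N_\theta^{1/(n+1)} \lesssim (r\lambda)^{(n-1)/(2(n+1))}$. Substituting this and using $\sum_\theta \|a\|_{\ell^2(\Lambda \cap \widetilde\theta)}^2 = \|a\|_{\ell^2(\Lambda)}^2$ closes the argument.

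The main technical point is bridging the gap between the extension form of Theorem~\ref{decoupling theorem}, which concerns the continuous operator $E_\lambda g$, and the discrete exponential sum $S(a)$, whose Fourier transform consists of lattice $\delta$-spikes rather than a measure on $\lambda S^{n-1}$. This is overcome by the standard smoothing-and-periodisation device outlined above; the $T^*T$ reduction, the per-cap Bernstein bound, and the elementary lattice point count are then routine.
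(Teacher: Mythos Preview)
Your argument is correct and shares the paper's overall architecture: the $T^*T$ reduction to an $\ell^2 \to L^p$ extension estimate, decoupling into $(r\lambda)^{1/2}$-caps, the per-cap Cauchy--Schwarz/Bernstein bound with the lattice count $\#(\Z^n \cap \widetilde\theta) \lesssim (r\lambda)^{(n-1)/2}$, and the $\ell^2$ summation are all exactly as in the paper. The one substantive difference is how decoupling is transferred to lattice frequencies lying in the annulus $A(\lambda,r)$ rather than on the sphere itself. The paper projects each $k \in \Z^n \cap A(\lambda,r)$ to the nearest point $\omega_k \in \lambda S^{n-1}$, applies the discrete on-the-sphere form of decoupling to the $\omega_k$, and then undoes the perturbation by Taylor-expanding the exponentials $e^{2\pi i x\cdot k} = \sum_\alpha c_\alpha (x-\bar x)^\alpha (k-\omega_k)^\alpha e^{2\pi i x\cdot \omega_k}$; the paper itself calls this rigorous justification ``a messy affair'' and it occupies most of the proof. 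You instead invoke the neighbourhood form of Bourgain--Demeter directly on the periodic function $S(a)$, whose distributional Fourier support already lies in $A(\lambda,r)$, and then transfer to $\T^n$ by periodicity. This is shorter and avoids the Taylor-series bookkeeping, at the cost of appealing to an equivalent reformulation of Theorem~\ref{decoupling theorem} not stated in the paper. Two small wrinkles: your wording of the smoothing step is garbled (one \emph{multiplies} $S(a)$ in physical space by a cutoff at scale $r^{-1}$, i.e.\ convolves the frequency $\delta$-spikes with a bump of width $\ll r$; convolving $S(a)$ itself with such a bump would not smear the spikes), and the slabs $\widetilde\theta$ are not literally convex, so the lattice-point count should pass through a containing box --- but neither affects the validity of the argument.
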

By duality and $T^*T$,~\eqref{decoupling corollary equation} is equivalent to either of the following inequalities:
\begin{align}
\label{corollary 1}
     \|\chi_{A(\lambda, r)}(D)f\|_{L^{\frac{2(n+1)}{n-1}}(\T^n)} &\lesssim_{\varepsilon} \lambda^{\varepsilon} (r\lambda)^{\frac{n-1}{2(n+1)}} \|f\|_{L^{2}(\T^n)}, \\
\label{corollary 0}
    \|\chi_{A(\lambda,r)}(D)f\|_{L^2(\T^n)} &\lesssim_{\varepsilon} \lambda^{\varepsilon} (r\lambda)^{\frac{n-1}{2(n+1)}}\|f\|_{L^{\frac{2(n+1)}{n+3}}(\T^n)}.
\end{align}
\begin{remark} If $r = \lambda^{-1}$, then Corollary~\ref{decoupling corollary 1} corresponds to a special case of the discrete Fourier restriction theorem of Bourgain--Demeter~\cite[Theorem 2.2]{BD2015}. On the other hand, if $r \sim 1$, then~\eqref{decoupling corollary equation} holds with no $\varepsilon$-loss as a simple consequence of the Stein--Tomas restriction theorem for the sphere, as discussed below. 
\end{remark}

\begin{proof}[Proof (of Corollary~\ref{decoupling corollary 1})] As remarked earlier, it suffices to prove~\eqref{corollary 1}. It is well known (see, for instance,~\cite{BD2015}) that Theorem~\ref{decoupling theorem} implies a discrete version of itself. In particular, defining $R := r^{-1}$, given any 1-separated subset $\Omega_{\lambda} \subseteq \lambda S^{n-1}$ and any sequence $(a_{\omega})_{\omega \in \Omega_{\lambda}}$, it follows that  
\begin{equation}\label{discrete decoupling 1}
       \Big\|\sum_{\omega \in \Omega_{\lambda}} a_{\omega} e^{2 \pi i x \cdot \omega}\Big\|_{L^{\frac{2(n+1)}{n-1}}(B_{R})} \lesssim_{\varepsilon} \lambda^{\varepsilon} \Big(\sum_{\theta \in \Theta(\lambda, r)} \Big\|\sum_{\omega \in \Omega_{\lambda} \cap \theta} a_{\omega} e^{2 \pi i x \cdot \omega}\Big\|_{L^{\frac{2(n+1)}{n-1}}(w_{B_{R}})}^2\Big)^{\frac{1}{2}}.
\end{equation}
Indeed, this may be deduced by fixing $\psi \in C^{\infty}_c(\hat{\R}^n)$ with $\psi(0) = 1$, applying Theorem~\ref{decoupling theorem} to the functions \begin{equation*}
   g_{\delta}(w) := \sum_{\omega \in \Omega_{\lambda}} a_{\omega} \psi(\delta^{-1}(w - \omega)) 
\end{equation*}
for $\delta > 0$ and applying a simple limiting argument; see~\cite{BD2015}. 

The spatial variable in~\eqref{discrete decoupling 1} is localised to a ball of radius $R = r^{-1}$, inducing frequency uncertainty at scale $r$. In particular, one can (at least heuristically) replace the family of points $\Omega_{\lambda}$ in this inequality with any perturbed family
\begin{equation*}
    \tilde{\Omega}_{\lambda} = \{\omega + O(r) : \omega \in \Omega_{\lambda}\}.
\end{equation*} 
For instance, one may take $\tilde{\Omega}_{\lambda} := \Z^n \cap A(\lambda,r)$, in which case~\eqref{discrete decoupling 1} implies that
\begin{equation}\label{discrete decoupling 1.5}
        \|\chi_{A(\lambda, r)}(D) f\|_{L^{\frac{2(n+1)}{n-1}}(B_{R})} \lesssim_{\varepsilon} \lambda^{\varepsilon} \Big(\sum_{\theta \in \Theta(\lambda, r)} \|\chi_{A_{\theta}(\lambda, r)}(D)f\|_{L^{\frac{2(n+1)}{n-1}}(w_{B_{R}})}^2\Big)^{1/2}
\end{equation}
where $A_{\theta}(\lambda,r)$ is the intersection of $A(\lambda, r)$ with the sector generated by $\theta$. Giving a rigorous justification for this uncertainty heuristic is a messy affair and is therefore postponed until the end of the proof.

Since the functions appearing in either side of~\eqref{discrete decoupling 1.5} are $1$-periodic, it follows that
\begin{equation*}
        \|\chi_{A(\lambda, r)}(D) f\|_{L^{\frac{2(n+1)}{n-1}}(\T^n)} \lesssim_{\varepsilon} \lambda^{\varepsilon} \Big(\sum_{\theta \in \Theta(\lambda, r)} \|\chi_{A_{\theta}(\lambda, r)}(D) f\|_{L^{\frac{2(n+1)}{n-1}}(\T^n)}^2\Big)^{1/2}.
\end{equation*}
To bound the right-hand side, observe the elementary estimate
\begin{equation*}
    \|\chi_{A_{\theta}(\lambda;r)}(D)f\|_{L^{\infty}(\T^n)} \leq [\#\Z^n \cap A_{\theta}(\lambda,r)]^{1/2} \|\chi_{A_{\theta}(\lambda,r)}(D)f\|_{L^2(\T^n)} 
    \end{equation*}
holds by a combination of Cauchy--Schwarz and Plancherel's theorem. Thus, given $2 \leq p \leq \infty$, it follows that
\begin{equation}\label{discrete decoupling 2}
    \|\chi_{A_{\theta}(\lambda,r)}(D)f \|_{L^{p}(\T^n)} \leq [\#\Z^n \cap A_{\theta}(\lambda,r)]^{1/2 - 1/p} \|\chi_{A_{\theta}(\lambda,r)}(D)f\|_{L^2(\T^n)}.
\end{equation}
Applying the bound $\#\Z^n \cap A_{\theta}(\lambda, r) \lesssim (r\lambda)^{\frac{n-1}{2}}$, taking $\ell^2$-norms in $\theta$ of both sides of the above inequality and using Plancherel's theorem to sum, the desired estimate follows. 

It remains to give a rigorous justification of the uncertainty principle heuristic used in the above argument. Given $k \in \Z^n \cap A(\lambda, r)$ let $\omega_k$ denote the point on $\lambda S^{n-1}$ closest to $k$, so that $|\omega_k - k| < r$, and $\Omega_{\lambda}$ denote the collection of all such $\omega_k$. Suppose $\bar{x} \in \R^n$ is the centre of $B_{R}$. Applying the Taylor series expansion for the exponential,
\begin{align*}
    \chi_{A(\lambda,r)}(D)f(x) &=  \sum_{\alpha \in \N_0^n} \frac{(2\pi i)^{|\alpha|}(x - \bar{x})^{\alpha}}{\alpha!}\sum_{\omega_k \in \Omega_{\lambda}} (k-\omega_k)^{\alpha} \hat{f}(k)e^{2 \pi i \bar{x} \cdot(k- \omega_k)}  e^{2 \pi i x \cdot \omega_k} \\
    &=:  \sum_{\alpha \in \N_0^n} \frac{(2\pi i)^{|\alpha|}(x - \bar{x})^{\alpha}}{\alpha!}\sum_{\omega \in \Omega_{\lambda}} a_{\alpha,\omega} e^{2 \pi i x \cdot \omega} ,
\end{align*}
where $|\alpha| = \alpha_1 + \cdots + \alpha_n$, $\alpha! = \alpha_1! \cdots \alpha_n!$ and $x^{\alpha} = x_1^{\alpha_1} \cdots x_n^{\alpha_n}$ for $\alpha \in \N_0^n$ and $x \in \R^n$. Thus, by the triangle inequality and~\eqref{discrete decoupling 1}, the left-hand side of~\eqref{discrete decoupling 1.5} is dominated by
\begin{equation*}
  \lambda^{\varepsilon} \sum_{\alpha \in \N_0^n} \frac{(2\pi R)^{|\alpha|}}{\alpha!} \Big(\sum_{\theta \in \Theta(\lambda, r)} \Big\|\sum_{\omega \in \Omega_{\lambda} \cap \theta} a_{\alpha,\omega} e^{2 \pi i x \cdot \omega}\Big\|_{L^{\frac{2(n+1)}{n-1}}(w_{B_{R}})}^2\Big)^{\frac{1}{2}}.
\end{equation*}
Given $l \in \Z^n$ write $\bar{x}_l := R l$ and $B^l := B( \bar{x}_l, R)$ so that
\begin{equation*}
    \Big\|\sum_{\omega \in \Omega_{\lambda} \cap \theta} a_{\alpha,\omega} e^{2 \pi i x \cdot \omega}\Big\|_{L^{\frac{2(n+1)}{n-1}}(w_{B_{R}})} \lesssim \sum_{l \in \Z^n} (1 + |l|)^{-N} \Big\|\sum_{\omega \in \Omega_{\lambda} \cap \theta} a_{\alpha,\omega} e^{2 \pi i x \cdot \omega}\Big\|_{L^{\frac{2(n+1)}{n-1}}(B^l)},
\end{equation*}
where $N := 100n$ is the exponent appearing in the definition of the weight function from \eqref{weight}. Indeed, this follows by pointwise dominating $w_{B_{R}}$ by a weighted sum of characteristic functions. As before, one may write
\begin{align*}
\sum_{\omega \in \Omega_{\lambda} \cap \theta} a_{\alpha,\omega} e^{2 \pi i x \cdot \omega} &=  \sum_{\beta \in \N_0^n} \frac{(2\pi i)^{|\beta|}(x - \bar{x}_l)^{\beta}}{\beta!}\sum_{\omega_k \in \Omega_{\lambda}} (\omega_k-k)^{\beta} a_{\alpha,\omega_k}e^{2 \pi i \bar{x}_l \cdot(\omega_k-k)}  e^{2 \pi i x \cdot k} \\
    &=  \sum_{\beta \in \N_0^n} \frac{(2\pi i)^{|\beta|}(x - \bar{x}_l)^{\beta}}{\beta!} \chi_{A_{\theta}(\lambda,r)}(D)m_{\alpha,\beta,l}(D) f(x)
\end{align*}
where $m_{\alpha, \beta, l}$ is supported on $\Z^n \cap A(\lambda, r)$ and is given by
\begin{equation*}
    m_{\alpha,\beta,l}(k) := (-1)^{|\beta|}(k-\omega_k)^{\alpha + \beta} e^{2 \pi i (\bar{x}-\bar{x}_l) \cdot (k - \omega_k)}  \quad \textrm{for $k \in \Z^n \cap A(\lambda,r)$.} 
\end{equation*}
In particular, 
\begin{equation*}
    \max_{k \in \Z^n \cap A(\lambda,r)} |m_{\alpha,\beta,l}(k)| \lesssim r^{|\alpha| + |\beta|}.
\end{equation*}

By combining the above observations, applying the triangle inequality and exploiting periodicity, one concludes that $\|\chi_{A(\lambda, r)}(D) f\|_{L^{\frac{2(n+1)}{n-1}}(\T^n)}$ is dominated by
\begin{equation*}
   \lambda^{\varepsilon} \sum_{\substack{\alpha,\beta \in \N_0^n \\ l \in \Z^n}} \frac{(2\pi R)^{|\alpha|+|\beta|}}{\alpha!\beta!}  (1 + |l|)^{-N}  \Big(\sum_{\theta \in \Theta(\lambda, r)} \|\chi_{A_{\theta}(\lambda, r)}(D) m_{\alpha,\beta,l}(D) f\|_{L^{\frac{2(n+1)}{n-1}}(\T^n)}^2\Big)^{1/2}.
\end{equation*}
Finally, a slight modification of the argument used to prove~\eqref{discrete decoupling 2} shows that, given $2 \leq p \leq \infty$,
\begin{equation*}
   \|\chi_{A_{\theta}(\lambda,r)}(D)m_{\alpha, \beta,l}(D)f \|_{L^{p}(\T^n)} \lesssim r^{|\alpha| + |\beta|} [\#A_{\theta}(\lambda,r)]^{1/2 - 1/p} \|\chi_{A_{\theta}(\lambda,r)}(D)f\|_{L^2(\T^n)}.
\end{equation*}
The gain in $r$ in the previous inequality compensates for the earlier losses in $R$ and the desired estimate now readily follows from Plancherel's theorem.
\end{proof}

\begin{corollary}\label{decoupling corollary 2} Let $n \geq 3$, $\lambda \geq 1$ and $1 \gtrsim r > \lambda^{-1}$ and suppose $m \in \ell^{\infty}(\Z^n)$ is supported in $A(\lambda,r)$. For all $\varepsilon > 0$,
\begin{equation*}
    \| m(D)f \|_{L^{\frac{2(n+1)}{n-1}}(\T^n)} \lesssim_{\varepsilon} \lambda^{\varepsilon} (r\lambda)^{\frac{n-1}{n+1}} \|m\|_{\ell^{\infty}(\Z^n)}\|f\|_{L^{\frac{2(n+1)}{n+3}}(\T^n)}.
\end{equation*}
\end{corollary}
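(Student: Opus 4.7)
The plan is to deduce Corollary~\ref{decoupling corollary 2} directly from Corollary~\ref{decoupling corollary 1} by factoring $m(D)$ through $L^2$ using the $L^2$-boundedness of the symbol $m$ and the Fourier support hypothesis. The key observation is that since $m$ is supported on $A(\lambda,r)$, one has the operator identity
\begin{equation*}
    m(D) = \chi_{A(\lambda,r)}(D)\,m(D)\,\chi_{A(\lambda,r)}(D),
\end{equation*}
so that $m(D)$ factorises through two applications of the rough spectral projector.

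With this factorisation in hand, the bound is obtained by chaining three estimates. First, the $T^*T$-equivalent form \eqref{corollary 0} of Corollary~\ref{decoupling corollary 1} yields
\begin{equation*}
    \|\chi_{A(\lambda,r)}(D)f\|_{L^2(\T^n)} \lesssim_{\varepsilon} \lambda^{\varepsilon}(r\lambda)^{\frac{n-1}{2(n+1)}}\|f\|_{L^{\frac{2(n+1)}{n+3}}(\T^n)}.
\end{equation*}
Second, Plancherel's theorem gives the trivial multiplier bound $\|m(D)g\|_{L^2(\T^n)} \leq \|m\|_{\ell^\infty(\Z^n)}\|g\|_{L^2(\T^n)}$. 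Third, the dual form \eqref{corollary 1} of Corollary~\ref{decoupling corollary 1} supplies
\begin{equation*}
    \|\chi_{A(\lambda,r)}(D)h\|_{L^{\frac{2(n+1)}{n-1}}(\T^n)} \lesssim_{\varepsilon} \lambda^{\varepsilon}(r\lambda)^{\frac{n-1}{2(n+1)}}\|h\|_{L^2(\T^n)}.
\end{equation*}

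Composing these three estimates in order, one gains a factor of $\lambda^{2\varepsilon}(r\lambda)^{\frac{n-1}{n+1}}\|m\|_{\ell^\infty(\Z^n)}$ when mapping $L^{\frac{2(n+1)}{n+3}}(\T^n)$ to $L^{\frac{2(n+1)}{n-1}}(\T^n)$, and relabelling $\varepsilon \mapsto \varepsilon/2$ gives the claim. There is really no serious obstacle here: the Fourier-support hypothesis reduces the problem to a bilinear composition, the endpoints of which are exactly the two equivalent formulations of Corollary~\ref{decoupling corollary 1} already established. The only small point worth verifying is that inserting the trivial $L^2 \to L^2$ multiplier estimate in the middle costs only the symbol norm $\|m\|_{\ell^\infty(\Z^n)}$ and does not disrupt the $\lambda^\varepsilon$ loss.
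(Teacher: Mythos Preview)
Your proof is correct and coincides with the paper's own argument: the paper also writes $m = \chi_{A(\lambda,r)}\cdot m \cdot \chi_{A(\lambda,r)}$ and then successively applies \eqref{corollary 1}, Plancherel's theorem, and \eqref{corollary 0}. The relabelling of $\varepsilon$ at the end is the only detail you make explicit that the paper leaves implicit.
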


\begin{proof} The corollary follows easily by writing 
\begin{equation*}
  m = \chi_{A(\lambda,r)}\cdot m \cdot \chi_{A(\lambda,r)}   
\end{equation*}
and successively applying~\eqref{corollary 1}, Plancherel's theorem and~\eqref{corollary 0}.
\end{proof}




\subsection*{Consequences of the Stein--Tomas theorem} An equivalent formulation of the Stein--Tomas restriction theorem for the sphere is that
\begin{equation}\label{Stein--Tomas}
   \Big( \int_{A(\lambda,1)} |\hat{F}(\xi)|^2 \,\ud \xi \Big)^{1/2} \lesssim \lambda^{\frac{n-1}{2(n+1)}} \|F\|_{L^{p_0'}(\R^n)};
\end{equation}
see, for instance,~\cite{Tao2004} or~\cite[Chapter 5]{Sogge2017}. This implies a version of Corollary \ref{decoupling corollary 1} for $r = 1$ with no $\varepsilon$-loss in the exponent.
\begin{corollary}\label{Stein--Tomas corollary 1} Let $n \geq 3$ and $\lambda \gtrsim 1$. Then
\begin{equation*}
    \big\| \chi_{A(\lambda, 1)}(D)f \big\|_{L^{\frac{2(n+1)}{n-1}}(\T^n)} \lesssim \lambda^{\frac{n-1}{n+1}} \|f\|_{L^{\frac{2(n+1)}{n+3}}(\T^n)}.
\end{equation*}
\end{corollary}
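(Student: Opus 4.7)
The plan is to reduce matters by $T^*T$ to a dual $L^{p_0'} \to L^2$ estimate on the torus (with $p_0 := \tfrac{2(n+1)}{n-1}$ and $p_0' := \tfrac{2(n+1)}{n+3}$), and then derive that from the $\R^n$ Stein--Tomas inequality \eqref{Stein--Tomas} via a standard periodisation argument. Specifically, arguing exactly as for \eqref{corollary 0} in the proof of Corollary~\ref{decoupling corollary 1}, the desired $L^{p_0'} \to L^{p_0}$ bound for the self-adjoint, idempotent Fourier multiplier $\chi_{A(\lambda,1)}(D)$ is equivalent, by duality and $T^*T$, to the discrete restriction-type estimate
\[
    \Big(\sum_{k \in \Z^n \cap A(\lambda, 1)} |\hat{f}(k)|^2\Big)^{1/2} \lesssim \lambda^{\frac{n-1}{2(n+1)}} \|f\|_{L^{p_0'}(\T^n)},
\]
where the left-hand side comes from Plancherel's theorem.

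To prove this inequality, I would fix a Schwartz function $\psi \in \mathscr{S}(\R^n)$ with $\hat{\psi}$ non-negative, $\hat{\psi}(0) > 0$, and $\mathrm{supp}\,\hat{\psi} \subseteq B(0, 1/4)$, and set $F := \psi \cdot f$, where $f$ is regarded as a $\Z^n$-periodic function on $\R^n$. Expanding $f$ as a Fourier series gives $\hat{F}(\xi) = \sum_{k \in \Z^n} \hat{f}(k) \hat{\psi}(\xi - k)$, a sum whose summands have pairwise disjoint support by the choice of $\hat\psi$. Since $B(k_0, 1/4) \subseteq A(\lambda, 2)$ for every $k_0 \in \Z^n \cap A(\lambda, 1)$, isolating each ball and integrating yields the lower bound
\[
    \sum_{k \in \Z^n \cap A(\lambda, 1)} |\hat{f}(k)|^2 \lesssim \int_{A(\lambda, 2)} |\hat{F}(\xi)|^2 \,\ud \xi.
\]

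To finish, I would apply \eqref{Stein--Tomas} to the $O(1)$ unit-width shells whose union covers $A(\lambda, 2)$, producing the upper bound $\lambda^{\frac{n-1}{n+1}} \|F\|_{L^{p_0'}(\R^n)}^2$ for the right-hand side; then exchange the $\R^n$ norm for its torus analogue by decomposing $\R^n$ into unit cubes $l + [0,1)^n$, $l \in \Z^n$, and using the rapid decay of $\psi$ to make $\sum_{l \in \Z^n} \sup_{x \in l + [0,1)^n} |\psi(x)|^{p_0'}$ summable, so that $\|F\|_{L^{p_0'}(\R^n)} \lesssim \|f\|_{L^{p_0'}(\T^n)}$. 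No substantive obstacle is anticipated: the corollary is essentially a direct transference of Stein--Tomas to the torus without $\varepsilon$-loss, and the only care required is in choosing $\hat\psi$ with small enough support so that the lattice contributions are separated, and the routine bookkeeping needed to pass from $\R^n$ to $\T^n$ via the Schwartz decay of $\psi$.
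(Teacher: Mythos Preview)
Your proposal is correct and follows essentially the same approach as the paper: reduce via $T^*T$ to the dual $L^{p_0'}\to L^2$ bound, then transfer the euclidean Stein--Tomas inequality to the torus by multiplying the periodic function $f$ by a Schwartz cutoff $\psi$ with small Fourier support and applying \eqref{Stein--Tomas} to $F:=\psi f$. The paper states this tersely (``apply \eqref{Stein--Tomas} to this function''), while you spell out the lattice separation, the covering of $A(\lambda,2)$ by $O(1)$ unit shells, and the passage $\|F\|_{L^{p_0'}(\R^n)}\lesssim\|f\|_{L^{p_0'}(\T^n)}$ via the rapid decay of $\psi$; these are exactly the details the paper leaves implicit.
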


 \begin{remark} Corollary \ref{Stein--Tomas corollary 1} is also a special case of a more general spectral projection bound for compact Riemann manifolds: see~\cite{Sogge1988} or~\cite[Chapter 5]{Sogge2017}. 
 \end{remark}

\begin{proof}[Proof (of Corollary \ref{Stein--Tomas corollary 1})] As before, by $T^*T$ the desired estimate is equivalent to 
\begin{equation}\label{Stein--Tomas 1}
  \big\|\chi_{A(\lambda,1)}(D)f\big\|_{L^2(\T^n)} \lesssim \lambda^{\frac{n-1}{2(n+1)}}\|f\|_{L^{\frac{2(n+1)}{n+3}}(\T^n)}.   
\end{equation}

Fix $f \in C^{\infty}(\T^n)$ and let $\psi \in \mathcal{S}(\R^n)$ be non-zero and Fourier supported in a ball of radius $1/2$. Letting $F \in \mathcal{S}(\R^n)$ be defined by
\begin{equation*}
    F(x) := \sum_{k \in \Z^n} \hat{f}(k) e^{2 \pi i x \cdot k} \psi(x),
\end{equation*}
the estimate~\eqref{Stein--Tomas 1} now follows by applying~\eqref{Stein--Tomas} to this function. 
\end{proof}

Arguing precisely as in the previous subsection, Corollary \ref{Stein--Tomas corollary 1} implies a version of Corollary \ref{decoupling corollary 2} for $r=1$ with no $\varepsilon$-loss.

\begin{corollary}\label{Stein--Tomas corollary 2} Let $n \geq 3$ and  $\lambda \geq 1$ and suppose $m \in \ell^{\infty}(\Z^n)$ is supported in $A(\lambda,1)$. Then
\begin{equation*}
    \| m(D)f \|_{L^{\frac{2(n+1)}{n-1}}(\T^n)} \lesssim \lambda^{\frac{n-1}{n+1}} \|m\|_{\ell^{\infty}(\Z^n)}\|f\|_{L^{\frac{2(n+1)}{n+3}}(\T^n)}.
\end{equation*}
\end{corollary}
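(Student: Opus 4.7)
The plan is to mimic exactly the proof of Corollary~\ref{decoupling corollary 2}, replacing the decoupling-based inequalities~\eqref{corollary 1} and~\eqref{corollary 0} with their $\varepsilon$-free Stein--Tomas counterparts obtained from Corollary~\ref{Stein--Tomas corollary 1}. The key observation is that the structure of the previous argument used only three ingredients: a sharp $L^2 \to L^{\frac{2(n+1)}{n-1}}$ bound for $\chi_{A(\lambda,r)}(D)$, a sharp $L^{\frac{2(n+1)}{n+3}} \to L^2$ bound for $\chi_{A(\lambda,r)}(D)$, and the trivial $\ell^\infty \to \ell^\infty$ multiplier bound for $m$. All three are available with no $\varepsilon$-loss when $r = 1$.

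First I would derive the $L^2$ endpoints of Corollary~\ref{Stein--Tomas corollary 1}. By $T^*T$ the estimate in that corollary is equivalent to
\begin{equation*}
  \|\chi_{A(\lambda,1)}(D) f\|_{L^2(\T^n)} \lesssim \lambda^{\frac{n-1}{2(n+1)}} \|f\|_{L^{\frac{2(n+1)}{n+3}}(\T^n)},
\end{equation*}
and, by duality, also to
\begin{equation*}
  \|\chi_{A(\lambda,1)}(D) f\|_{L^{\frac{2(n+1)}{n-1}}(\T^n)} \lesssim \lambda^{\frac{n-1}{2(n+1)}} \|f\|_{L^{2}(\T^n)}.
\end{equation*}
(Both of these are in fact already established in the proof of Corollary~\ref{Stein--Tomas corollary 1} via~\eqref{Stein--Tomas}.)

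Next, since $m$ is supported in $A(\lambda,1)$, the operator factorises as $m(D) = \chi_{A(\lambda,1)}(D) \circ m(D) \circ \chi_{A(\lambda,1)}(D)$. Applying the $L^2 \to L^{\frac{2(n+1)}{n-1}}$ bound first, then the trivial estimate $\|m(D)g\|_{L^2(\T^n)} \leq \|m\|_{\ell^\infty(\Z^n)} \|g\|_{L^2(\T^n)}$ from Plancherel's theorem, and finally the $L^{\frac{2(n+1)}{n+3}} \to L^2$ bound, one chains the three inequalities into
\begin{equation*}
  \|m(D) f\|_{L^{\frac{2(n+1)}{n-1}}(\T^n)} \lesssim \lambda^{\frac{n-1}{2(n+1)}} \cdot \|m\|_{\ell^\infty(\Z^n)} \cdot \lambda^{\frac{n-1}{2(n+1)}} \|f\|_{L^{\frac{2(n+1)}{n+3}}(\T^n)},
\end{equation*}
which is the desired estimate after combining the two powers of $\lambda$.

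There is no real obstacle here: the argument is a mechanical adaptation of the proof of Corollary~\ref{decoupling corollary 2}, and the only reason to record it separately is that the $r = 1$ case comes from Stein--Tomas rather than decoupling and so avoids the $\lambda^{\varepsilon}$ loss. The one thing worth double-checking is that the factorisation $m = \chi_{A(\lambda,1)} \cdot m \cdot \chi_{A(\lambda,1)}$ is legitimate as a pointwise identity of multipliers on $\Z^n$, which is immediate from the support assumption on $m$.
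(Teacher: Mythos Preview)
Your proposal is correct and follows exactly the approach the paper indicates: the paper simply says ``Arguing precisely as in the previous subsection, Corollary~\ref{Stein--Tomas corollary 1} implies a version of Corollary~\ref{decoupling corollary 2} for $r=1$ with no $\varepsilon$-loss,'' and the previous subsection's proof of Corollary~\ref{decoupling corollary 2} is precisely the factorisation $m = \chi_{A(\lambda,r)} \cdot m \cdot \chi_{A(\lambda,r)}$ combined with the two $L^2$ endpoint bounds and Plancherel.
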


\begin{remark} Corollary \ref{Stein--Tomas corollary 2} is also a special instance of the multiplier lemma from~\cite[Lemma 2.3]{BSSY}, which applies to more general compact Riemannian manifolds.
\end{remark}




\subsection*{Proof of the spectral projection bound} The ingredients introduced above may now be combined to prove the desired spectral projection bound. 

\begin{proof}[Proof (of Proposition~\ref{spectral proposition})] Fixing $\varepsilon > 0$, recall that it suffices to show~\eqref{spectral 2} holds for $\rho = \lambda^{-1/3 + \varepsilon}$. In order to justify this choice of $\rho$, and in view of the proof of Theorem~\ref{refined resolvent theorem} below, it will be convenient to initially let $\rho$ denote some unspecified parameter satisfying $1 \gtrsim \rho \geq \lambda^{-1}$ and only fix the value later in the argument.

Fix a Schwartz function $\eta$ on $\hat{\R}^n$ satisfying $\check{\eta}(x) = 1$ whenever $|x| \leq 1$. Recalling the definition of the smoothed out multiplier $m^{\lambda, \rho}$ from \eqref{smooth multiplier}, decompose
 \begin{equation*}
    m^{\lambda, \rho} = m_0^{\lambda, \rho} + m_1^{\lambda, \rho} 
\end{equation*}
where $m_0^{\lambda,\rho} := m^{\lambda,\rho} \ast \eta$. Writing $p_1 := \frac{2n}{n-2}$, it follows that
\begin{equation}\label{spectral proof 1}
    \|m^{\lambda, \rho}(D)\|_{p_1' \to p_1} \leq \|m_0^{\lambda, \rho}(D)\|_{p_1' \to p_1} + \|m_1^{\lambda, \rho}(D)\|_{p_1' \to p_1}
\end{equation}
where $p'$ denotes the H\"older conjugate of a Lebesgue exponent $p$.

Both term on the right-hand side of~\eqref{spectral proof 1} are estimated via complex interpolation between an $L^{p_0'} \to L^{p_0}$ bound for $p_0 := \frac{2(n+1)}{n-1}$ and an $L^{1} \to L^{\infty}$ bound. In particular, by the Riesz--Thorin theorem,
\begin{equation}\label{spectral proof 2}
    \|m_j^{\lambda, \rho}(D)\|_{p_1' \to p_1} \leq \|m_j^{\lambda, \rho}(D)\|_{p_0' \to p_0}^{\frac{(n-2)(n+1)}{n(n-1)}} \|m_j^{\lambda, \rho}(D)\|_{1 \to \infty}^{\frac{2}{n(n-1)}} \quad \textrm{for $j = 0,1$.}
\end{equation}

To bound $m_0^{\lambda, \rho}(D)$, apply a partition of unity to decompose
\begin{equation*}
    \eta = \sum_{\ell \in \Z^n} (1+|\ell|)^{-N} \tilde{\eta}_{\ell}
\end{equation*}
where $N := 100n$ and each $\tilde{\eta}_{\ell}$ is supported on the ball of unit radius centred at $\ell$ and satisfies $\|\tilde{\eta}_{\ell}\|_{\infty} \lesssim 1$. Note that the latter property holds due to the rapid decay of $\eta$. This induces a corresponding decomposition of the multiplier
\begin{equation}\label{spectral proof 2.5}
    m_0^{\lambda, \rho} = \sum_{\ell \in \Z^n} (1+|\ell|)^{-N} \tilde{m}_{\ell}^{\lambda, \rho}
\end{equation}
where each $\tilde{m}_{\ell}^{\lambda, \rho}$ is supported on the Minkowski sum \begin{equation*}
    \mathrm{supp}\,m^{\lambda,\rho} + \mathrm{supp}\,\tilde{\eta}_{\ell} \subseteq \ell + A(\lambda, 4).
\end{equation*} Furthermore,
\begin{equation}\label{spectral proof 3}
    \|\tilde{m}_{\ell}^{\lambda,\rho}\|_{\ell^{\infty}(\Z^n)} \lesssim \rho \quad \textrm{and} \quad \|\tilde{m}_{\ell}^{\lambda,\rho}\|_{\ell^{1}(\Z^n)} \lesssim \rho \lambda^{n-1}.
    \end{equation}
To see this, observe that $|\tilde{m}_{\ell}^{\lambda,\rho}(\xi)| \lesssim |B(\ell + \xi,1) \cap A(\lambda, \rho)|$, which immediately yields the $\ell^{\infty}$ estimate. The $\ell^1$ bound then follows from the $\ell^{\infty}$ estimate and the fact that $\#\big(\Z^n \cap \ell + A(\lambda,4)\big) \lesssim \lambda^{n-1}$. Consequently, and in view of Corollary~\ref{Stein--Tomas corollary 2},
\begin{equation}\label{spectral proof 3.5}
    \|\tilde{m}_{\ell}^{\lambda,\rho}(D)\|_{p_0' \to p_0} \lesssim \rho\lambda^{\frac{n-1}{n+1}} \quad \textrm{and} \quad  \|\tilde{m}_{\ell}^{\lambda,\rho}(D)\|_{1 \to \infty} \lesssim \rho \lambda^{n-1}.
\end{equation}
More precisely, the first inequality in \eqref{spectral proof 3.5} follows from Corollary~\ref{Stein--Tomas corollary 2} together with the $\ell^{\infty}$ estimate from~\eqref{spectral proof 3}. Here it is important to use Corollary~\ref{Stein--Tomas corollary 2} rather than Corollary~\ref{decoupling corollary 2} to ensure that there is no $\varepsilon$-loss in the exponent. The second inequality in \eqref{spectral proof 3.5} is a direct consequence of the $\ell^1$ estimate in~\eqref{spectral proof 3} (which allows one to bound the $\ell^{\infty}$ norm of the kernel associated to $m_{\ell}^{\lambda,\rho}(D)$).

Using the triangle inequality and the decay factor in \eqref{spectral proof 2.5} to sum the above estimates,
\begin{equation}\label{spectral proof 4}
    \|m_{0}^{\lambda,\rho}(D)\|_{p_0' \to p_0} \lesssim \rho\lambda^{\frac{n-1}{n+1}} \quad \textrm{and} \quad  \|m_{0}^{\lambda,\rho}(D)\|_{1 \to \infty} \lesssim \rho \lambda^{n-1}.
\end{equation}
Interpolating the two inequalities in~\eqref{spectral proof 4} via~\eqref{spectral proof 2}, one deduces that
\begin{equation}\label{spectral proof 5}
   \|m_{0}^{\lambda, \rho}(D)\|_{p_1' \to p_1}\lesssim  \rho\lambda.
\end{equation}

It remains to bound $m_1^{\lambda, \rho}(D)$. Since the multiplier $m^{\lambda,\rho}$ is supported in $A(\lambda, 2\rho)$ and is uniformly bounded, it follows from Corollary~\ref{decoupling corollary 2} that
\begin{equation}\label{spectral proof 6}
    \|m_1^{\lambda, \rho}(D)\|_{p_0' \to p_0} \leq \|m_0^{\lambda, \rho}(D)\|_{p_0' \to p_0} + \|m^{\lambda, \rho}(D)\|_{p_0' \to p_0} \lesssim_{\varepsilon} \lambda^{\varepsilon} (\rho\lambda)^{\frac{n-1}{n+1}},
\end{equation}
where the first term on the right-hand side is estimated using~\eqref{spectral proof 4}. On the other hand, it is claimed that
\begin{equation}\label{spectral proof 7}
\|m_1^{\lambda, \rho}(D)\|_{1 \to \infty} \lesssim (\lambda/\rho)^{(n-1)/2}.
\end{equation}
Temporarily assuming this bound, interpolating~\eqref{spectral proof 7} against~\eqref{spectral proof 6} via~\eqref{spectral proof 2} yields
\begin{equation}\label{spectral proof 8}
   \|m_1^{\lambda, \rho}(D)\|_{p_1' \to p_1} \lesssim_{\varepsilon} \lambda^{\varepsilon}  \rho^{1-3/n}\lambda^{1-1/n}.
\end{equation}
Substituting~\eqref{spectral proof 5} and~\eqref{spectral proof 8} into~\eqref{spectral proof 1}, one concludes that
\begin{equation}\label{spectral proof 9}
    \|m^{\lambda,\rho}(D)\|_{p_1' \to p_1} \lesssim_{\varepsilon} \rho\lambda +  \lambda^{\varepsilon}\rho^{1-3/n} \lambda^{1-1/n}.
\end{equation}
Replacing $\varepsilon$ with $3\varepsilon/n$ in the above display and choosing $\rho = \lambda^{-1/3 + \varepsilon}$ so as to optimise the estimate, one deduces the desired bound. Thus, it remains to verify~\eqref{spectral proof 7}. 

Computing the kernel of $m^{\lambda,\rho}_1(D)$ and applying the Poisson summation formula,
\begin{equation}\label{spectral proof 10}
   \|m_1^{\lambda, \rho}(D)\|_{1 \to \infty} \leq \sup_{x \in \T^n} \Big|\sum_{k \in \Z^n}  m^{\lambda,\rho}_1(k)e^{2 \pi i x \cdot k} \Big| = \sup_{x \in \T^n} \Big|\sum_{k \in \Z^n} \big(m^{\lambda,\rho}_1\big)\;\widecheck{}\;(x+k) \Big|.  
\end{equation}
Note that $\big(m^{\lambda,\rho}_1\big)\;\widecheck{}\;(x) = \big(m^{\lambda,\rho}\big)\;\widecheck{}\;(x)\big(1- \check{\eta}(x)\big)$. If $\sigma$ denotes the surface measure on $S^{n-1}$, then applying polar coordinates to the definition of the Fourier transform yields
\begin{equation}\label{spectral proof 11}
    \big(m^{\lambda,\rho}\big)\;\widecheck{}\;(x) = \int_0^{\infty} \check{\sigma}(rx) \beta\big(\rho^{-1}(r - \lambda)\big) r^{n-1}\,\ud r.
\end{equation}
By stationary phase (see, for instance,~\cite[Chapter VIII]{Stein1993} or~\cite[Chapter 1]{Sogge2017}), 
\begin{equation*}
    \check{\sigma}(x) = \sum_{\pm} e^{\pm 2 \pi i |x|} a_{\pm}(x)
\end{equation*}
where each $a_{\pm} \in C^{\infty}(\R^n)$ is a symbol of order $-(n-1)/2$ in the sense that $|\partial_x^{\alpha} a_{\pm}(x)| \lesssim_{\alpha} (1+|x|)^{-(n-1)/2 - |\alpha|}$ for all $\alpha \in \N_0^n$. Substituting this identity into~\eqref{spectral proof 11} and applying a change of variables,
\begin{equation}\label{spectral proof 12}
    \big(m^{\lambda,\rho}\big)\;\widecheck{}\;(x) =  \rho \sum_{\pm}\int_0^{\infty} e^{\pm 2 \pi i r\rho|x|} a_{\pm}(\rho rx) \beta\big(r - \rho^{-1}\lambda\big) (\rho r)^{n-1}\,\ud r.
\end{equation}
Applying repeated integration-by-parts, it follows that
\begin{equation}\label{spectral proof 13}
    |\big(m^{\lambda,\rho}\big)\;\widecheck{}\;(x)| \lesssim \rho \lambda^{n-1} (1+\lambda|x|)^{-(n-1)/2} (1 + \rho|x|)^{-N} \lesssim \frac{\rho \lambda^{(n-1)/2}}{|x|^{(n-1)/2}} (1 + \rho|x|)^{-N}.
\end{equation}

To bound the right-hand side of~\eqref{spectral proof 10} the sum is broken into two pieces. Fix $x \in \T^n$ and write
\begin{equation*}
    \Big|\sum_{k \in \Z^n} \big(m^{\lambda,\rho}_1\big)\;\widecheck{}\;(x+k) \Big| \lesssim \big|\big(m^{\lambda,\rho}_1\big)\;\widecheck{}\;(x)\big| + \Big|\sum_{k \in \Z^n\setminus\{0\}} \big(m_1^{\lambda,\rho}\big)\;\widecheck{}\;(x+k) \Big|
\end{equation*}
Since $\check{\eta}$ vanishes to infinite order at the origin,~\eqref{spectral proof 12} implies that
\begin{equation*}
  \big|\big(m^{\lambda,\rho}_1\big)\;\widecheck{}\;(x)\big| = \big|\big(m^{\lambda,\rho}\big)\;\widecheck{}\;(x)\big(1- \check{\eta}(x)\big)\big| \lesssim \rho \lambda^{(n-1)/2}.
\end{equation*}
The remaining term satisfies the following, more restrictive, bound.

\begin{lemma}\label{crude kernel estimate} 
\begin{equation*}
    \Big|\sum_{k \in \Z^n\setminus\{0\}} \big(m_1^{\lambda,\rho}\big)\;\widecheck{}\;(x+k) \Big| \lesssim  (\lambda/\rho)^{(n-1)/2}.
\end{equation*}
\end{lemma}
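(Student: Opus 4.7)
The plan is to derive the claim by a direct application of the kernel estimate \eqref{spectral proof 13} together with a dyadic decomposition of the lattice sum. Since, for $x$ ranging over a fundamental domain of $\T^n$, one has $|x+k| \sim 1 + |k|$ whenever $k \in \Z^n \setminus \{0\}$ (in particular $|x+k|\gtrsim 1 \geq \lambda^{-1}$), the factor $1 - \check{\eta}(x+k)$ is trivially $O(1)$ and the second form of \eqref{spectral proof 13} applies. Hence
\begin{equation*}
    \Big|\sum_{k \in \Z^n\setminus\{0\}} \big(m_1^{\lambda,\rho}\big)\;\widecheck{}\;(x+k) \Big| \lesssim \rho\lambda^{(n-1)/2} \sum_{k \in \Z^n\setminus\{0\}} \frac{1}{|k|^{(n-1)/2}}\big(1 + \rho|k|\big)^{-N}.
\end{equation*}

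The plan is then to decompose this series dyadically: write $\{k \in \Z^n\setminus\{0\}\} = \bigsqcup_{j \geq 0} S_j$ where $S_j := \{k : 2^j \leq |k| < 2^{j+1}\}$, and note that $\#S_j \lesssim 2^{jn}$. First I would handle the range $2^j \leq \rho^{-1}$, where the weight $(1+\rho|k|)^{-N}$ is harmless; here each dyadic block contributes $\lesssim 2^{jn} \cdot 2^{-j(n-1)/2} = 2^{j(n+1)/2}$, and summing the resulting geometric series up to $j \sim \log_2 \rho^{-1}$ gives a total $\lesssim \rho^{-(n+1)/2}$. For $2^j > \rho^{-1}$, the factor $(\rho 2^j)^{-N}$ provides rapid decay, so the tail is dominated by its first term and also contributes $\lesssim \rho^{-(n+1)/2}$.

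Combining these estimates, one concludes
\begin{equation*}
    \Big|\sum_{k \in \Z^n\setminus\{0\}} \big(m_1^{\lambda,\rho}\big)\;\widecheck{}\;(x+k) \Big| \lesssim \rho\lambda^{(n-1)/2} \cdot \rho^{-(n+1)/2} = (\lambda/\rho)^{(n-1)/2},
\end{equation*}
which is the desired bound, uniform in $x \in \T^n$.

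There is no essential obstacle here, as the argument reduces to a routine dyadic summation once the pointwise kernel bound \eqref{spectral proof 13} is in hand. The only mild subtlety is keeping track that $|x+k| \sim 1 + |k|$ for $x \in \T^n$ and $k \neq 0$, which guarantees that one is always in the regime $\lambda|x+k| \gtrsim 1$ where the form of \eqref{spectral proof 13} involving $|x|^{-(n-1)/2}$ is applicable; this is what allows one to avoid splitting off the contribution near the origin separately.
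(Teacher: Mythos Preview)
Your proof is correct and follows essentially the same approach as the paper: both apply the pointwise bound \eqref{spectral proof 13} via the triangle inequality, use $|x+k|\sim |k|$ for $k\neq 0$ and $x$ in a fundamental domain, and then sum. The paper simply records the resulting sum bound in one line, whereas you spell out the dyadic decomposition explicitly; the content is the same.
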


\begin{proof} Since $|\big(m_1^{\lambda,\rho}\big)\;\widecheck{}\;(x)| \lesssim |\big(m^{\lambda,\rho}\big)\;\widecheck{}\;(x)|$, applying~\eqref{spectral proof 13} yields
\begin{equation*}
    \sum_{k \in \Z^n \setminus \{0\}}|\big(m_1^{\lambda,\rho}\big)\;\widecheck{}\;(x+k)| \lesssim \rho \lambda^{(n-1)/2}\sum_{k \in \Z^n \setminus \{0\}} |k|^{-(n-1)/2} (1 + \rho|k|)^{-N}\lesssim  (\lambda/\rho)^{(n-1)/2}.
\end{equation*}
\end{proof}
\noindent Combining these observations,~\eqref{spectral proof 7} immediately follows, concluding the proof of Proposition~\ref{spectral proposition}.
\end{proof}




\section{Improvements via multidimensional Weyl sum estimates}\label{Weyl sum section}

By Theorem~\ref{equivalence theorem} and the reductions in \S\ref{spectral projector section}, the uniform resolvent estimates in Theorem~\ref{refined resolvent theorem} are equivalent to the following multiplier bound.

\begin{proposition}\label{refined spectral proposition} Let $n \geq 3$, $\lambda \geq 1$ and $\varepsilon >0$. If $\rho := \lambda^{-\beta_n + \varepsilon}$, then
\begin{equation*}
    \big\| m^{\lambda, \rho}(D)f \big\|_{L^{\frac{2n}{n-2}}(\T^n)} \lesssim_{\varepsilon} \rho \lambda \|f\|_{L^{\frac{2n}{n+2}}(\T^n)}.
\end{equation*}
\end{proposition}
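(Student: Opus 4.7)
The proof will follow the scheme of Proposition~\ref{spectral proposition} verbatim in its outer structure: decompose $m^{\lambda,\rho} = m_0^{\lambda,\rho} + m_1^{\lambda,\rho}$ as in \eqref{spectral proof 2.5}, bound each piece by Riesz--Thorin interpolation between an $L^{p_0'} \to L^{p_0}$ estimate and an $L^1 \to L^\infty$ estimate, and then optimise in $\rho$. The bound $\|m_0^{\lambda,\rho}(D)\|_{p_1' \to p_1} \lesssim \rho\lambda$ from \eqref{spectral proof 5} requires no modification, and the decoupling-based $L^{p_0'} \to L^{p_0}$ estimate \eqref{spectral proof 6} for $m_1^{\lambda,\rho}(D)$ is also reused as-is. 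The single place where the proof will differ from \S\ref{proof section} is in the $L^1 \to L^\infty$ estimate \eqref{spectral proof 7}.

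The strategy is to sharpen Lemma~\ref{crude kernel estimate} by exploiting the oscillation in
\begin{equation*}
  \sum_{k \in \Z^n\setminus\{0\}} \bigl(m_1^{\lambda,\rho}\bigr)\;\widecheck{}\;(x+k),
\end{equation*}
rather than merely taking absolute values. Using the stationary phase representation \eqref{spectral proof 12} and a dyadic decomposition in $|k|$, the bulk of this sum reduces, up to acceptable error and lower-order symbol terms, to
\begin{equation*}
  \rho \lambda^{(n-1)/2} \sum_{\pm} \sum_{k \in \Z^n : |k| \sim M} a_{\pm}(x+k)\, e^{\pm 2\pi i \lambda |x+k|},
\end{equation*}
for dyadic scales $1 \lesssim M \lesssim \rho^{-1}$ (beyond which the cutoff $(1+\rho|\cdot|)^{-N}$ forces rapid decay). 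The symbol $a_\pm$ is of order $-(n-1)/2$, so the trivial bound on each dyadic piece is of size $M^{(n+1)/2}$, which is exactly the bound that produces the $(\lambda/\rho)^{(n-1)/2}$ figure in Lemma~\ref{crude kernel estimate}.

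The key step is to replace this trivial bound by the multidimensional Weyl sum estimates of Müller~\cite{Muller1999}. After rescaling $k = M\tilde k$, the phase $\lambda|x+k|$ becomes a polynomial phase of the form $\lambda M \Phi(\tilde k)$ (up to the Taylor remainder, which is controlled by a partition of unity on the relevant shells) whose Hessian is bounded away from degenerate in the appropriate regime; this places the sum in the form to which Müller's estimates apply. Summation by parts in the smooth symbol $a_{\pm}$, together with a standard partition-of-unity reduction, reduces matters to estimating exponential sums to which those results give a gain of a factor $M^{-\delta_n}$ over the trivial $M^{(n+1)/2}$ bound. Summing the improved estimate over the dyadic scales $M$ and collecting, this yields a strengthened $L^1 \to L^\infty$ bound of the shape
\begin{equation*}
  \|m_1^{\lambda,\rho}(D)\|_{1 \to \infty} \lesssim_{\varepsilon} \lambda^{\varepsilon}(\lambda/\rho)^{(n-1)/2}\,\rho^{\sigma_n}
\end{equation*}
for an explicit gain $\sigma_n > 0$ tied to the Müller exponent.

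Inserting this refined bound into the Riesz--Thorin interpolation \eqref{spectral proof 2} in place of \eqref{spectral proof 7} produces an improved version of \eqref{spectral proof 9}: the second term becomes $\lambda^{\varepsilon}\rho^{1-3/n + c\sigma_n}\lambda^{1-1/n}$ for an explicit constant $c$. Balancing this term against the first term $\rho\lambda$ now gives a threshold $\rho = \lambda^{-\beta_n + \varepsilon}$ strictly smaller than $\lambda^{-1/3+\varepsilon}$; a short computation tracking the algebra will identify $\beta_n$ with the value $\tfrac{1}{3} + \tfrac{n}{3} \cdot \tfrac{1}{21n^2 - n - 24}$ stated in Theorem~\ref{refined resolvent theorem}. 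The main obstacle is the Weyl sum step itself: one must set up the lattice sum so that the hypotheses of Müller's theorem are satisfied uniformly over the relevant range of dyadic shells $M$ (including the critical shell $M \sim \rho^{-1}$ which dominates the geometric sum and fixes the exponent), and carefully account for the symbol factor and the $x$-dependence of the phase. Once this is done, the remainder of the argument — interpolation and optimisation — is a direct adaptation of \S\ref{proof section}.
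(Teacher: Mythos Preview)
Your proposal follows the paper's strategy: reuse \eqref{spectral proof 5} and \eqref{spectral proof 6} unchanged and sharpen only the kernel bound \eqref{spectral proof 7} via M\"uller's Weyl sum estimates, then interpolate and optimise. Two points in your sketch, however, do not match the actual argument and would need repair.

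First, M\"uller's theorem (Theorem~\ref{exponential sum theorem}) carries an auxiliary integer parameter $q$, and the gain it gives over the trivial bound is $(M^{-(q+1)}\lambda)^{\omega_{n,q}}$, not a pure power $M^{-\delta_n}$ or $\rho^{\sigma_n}$ as you write. The refined kernel estimate (Lemma~\ref{refined kernel lemma}) therefore reads $(\rho^{q+1}\lambda)^{\omega_{n,q}}(\lambda/\rho)^{(n-1)/2}$, depending on both $\rho$ and $\lambda$ and on $q$. After interpolation this yields a one-parameter family of exponents $\beta_{n,q}$; the specific value $\beta_n$ in the statement arises only after checking that $q=3$ is the optimal choice and that the hypothesis \eqref{rho vrs lambda} of M\"uller's theorem is satisfied for $q=3$ at the resulting $\rho$. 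Without tracking $q$ you cannot produce the formula for $\beta_n$.

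Second, your description of the reduction (``the phase becomes a polynomial phase $\lambda M\Phi(\tilde k)$ up to the Taylor remainder'') is not how the estimate is applied. Theorem~\ref{exponential sum theorem} applies directly to smooth phases satisfying derivative bounds and a Hessian lower bound on $\partial^{\alpha(q)}\phi$ for some $|\alpha(q)|=q$; no polynomial approximation is needed. The actual obstacle is that for the phase $\pm\lambda|x+u|$ this Hessian condition fails globally in the original coordinates. The paper resolves this via the coordinate preparation Lemma~\ref{preparation lemma}: one covers $\R^n\setminus\{0\}$ by finitely many homogeneous sectors $S_\ell$, and on each sector an integer linear change of variables $Q_\ell$ renders the required Hessian $\mathrm{Hess}\,\partial^{\alpha(q)}|Q_\ell u|$ nondegenerate. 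After a partition of unity adapted to the $S_\ell$ and a decomposition of $\Z^n$ into cosets of $Q_\ell\Z^n$, Theorem~\ref{exponential sum theorem} applies directly to each piece. This coordinate preparation, not a Taylor expansion, is the main technical content of the kernel-estimate step.
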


Proposition~\ref{refined spectral proposition} follows by combining the argument from \S\ref{proof section} with a more delicate estimation of the kernel. The use of the triangle inequality in the first step of the proof of Lemma~\ref{crude kernel estimate} introduces losses and the idea is to exploit cancellation between the terms of the sum. This is analogous to the refinements of Hlawka's argument found in~\cite{KN1992, Muller1999, Guo2012}. In particular, the exponential sum estimates from~\cite{Muller1999} imply the following strengthened version Lemma~\ref{crude kernel estimate}. 

\begin{lemma}\label{refined kernel lemma} Let $\lambda \geq 1$ and $1 \gtrsim \rho \geq \lambda^{-1}$. For all $q \in \N$ satisfying 
\begin{equation}\label{rho vrs lambda}
   \lambda \geq \rho^{-(q - 1 - 2/n + 2^{1-q})}  
\end{equation}
the kernel estimate
\begin{equation*}
    \Big|\sum_{k \in \Z^n\setminus\{0\}} \big(m_1^{\lambda,\rho}\big)\;\widecheck{}\;(x+k) \Big| \lesssim_{\varepsilon,q} \lambda^{\varepsilon}  (\rho^{q+1} \lambda)^{\omega_{n,q}}(\lambda/\rho)^{(n-1)/2}
\end{equation*}
holds for
\begin{equation*}
    \omega_{n,q} := \frac{n}{2n(2^q-1) +  2^{q+1}}.
\end{equation*}
\end{lemma}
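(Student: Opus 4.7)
The strategy is a standard Hlawka-type refinement: keep the stationary phase expansion of $\big(m^{\lambda,\rho}\big)\;\widecheck{}\;$ but replace the triangle inequality step of Lemma~\ref{crude kernel estimate} by a cancellation argument based on the multidimensional Weyl sum estimates of~\cite{Muller1999}. First, by sharpening the analysis leading to~\eqref{spectral proof 13}, one obtains for $|x| \gtrsim 1$ the oscillatory representation
\[
\big(m^{\lambda,\rho}\big)\;\widecheck{}\;(x) = \rho \sum_{\pm} e^{\pm 2 \pi i \lambda |x|}\, a_{\pm}^{\lambda,\rho}(x),
\]
where $a_{\pm}^{\lambda,\rho}$ is a classical symbol satisfying
\[
|\partial_x^{\gamma} a_{\pm}^{\lambda,\rho}(x)| \lesssim_\gamma \lambda^{(n-1)/2}|x|^{-(n-1)/2-|\gamma|}(1+\rho|x|)^{-N}.
\]
Decompose the lattice sum dyadically into shells $|k|\sim R$. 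For $R \gg \rho^{-1}$ the rapid decay factor $(1+\rho|k|)^{-N}$ lets the triangle inequality suffice, while for $R \lesssim 1$ only $O(1)$ terms are present and the pointwise bound~\eqref{spectral proof 13} is adequate. Attention therefore focuses on the intermediate range $1 \lesssim R \lesssim \rho^{-1}$, where genuine cancellation is required.

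For such $R$, summation by parts in the slowly varying symbol $a_{\pm}^{\lambda,\rho}$ reduces matters to controlling the exponential sum
\[
W_R(x) := \sum_{\substack{k \in \Z^n \\ |k| \sim R}} e^{\pm 2 \pi i \lambda |x+k|}\, \chi(k/R)
\]
for a Schwartz cut-off $\chi$. Treating $\phi(k) := \lambda|x+k|$ as a smooth phase, apply $q$ successive Weyl differencings --- the multidimensional analogue of van der Corput's $q$-th derivative method --- to majorise $W_R$ by an average of exponential sums whose phases are the iterated differences $\Delta_{h_1}\cdots \Delta_{h_q}\phi$. Invoking the multidimensional Weyl sum estimates from~\cite{Muller1999} on these majorising sums, and optimising parameters, yields an upper bound whose savings over the trivial estimate $R^n$ are a power of $\lambda/R^{q+1}$. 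The hypothesis~\eqref{rho vrs lambda} is precisely the regime ensuring that the residual phase, after $q$ differencings, still carries enough oscillation for these estimates to apply throughout $R \lesssim \rho^{-1}$.

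Combining the symbol bound on $a_{\pm}^{\lambda,\rho}$ with the resulting estimate for $W_R$, one finds that the shell $|k| \sim R$ contributes at most
\[
\lesssim_{\varepsilon,q} \lambda^\varepsilon \rho \lambda^{(n-1)/2} R^{(n+1)/2} (\lambda/R^{q+1})^{\omega_{n,q}},
\]
and a direct computation shows that, summing dyadically, the dominant contribution occurs at $R \sim \rho^{-1}$, where the right-hand side collapses to exactly $\lambda^{\varepsilon}(\rho^{q+1}\lambda)^{\omega_{n,q}}(\lambda/\rho)^{(n-1)/2}$. The principal obstacle lies in the Weyl step: the iterated differences of $\phi(k) = \lambda|x+k|$ must be shown to meet the precise non-degeneracy hypotheses of the exponential sum estimates in~\cite{Muller1999} --- a non-trivial matter since the Hessian of $\phi$ has rank only $n-1$ rather than $n$ --- and the specific exponent $\omega_{n,q} = n/(2n(2^q-1)+2^{q+1})$ has to be recovered by careful bookkeeping of the losses incurred at each differencing stage. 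Everything else is a routine refinement of the argument used for Lemma~\ref{crude kernel estimate}.
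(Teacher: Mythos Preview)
Your outline matches the paper's argument closely: stationary phase expansion, dyadic localisation in $|k|$, and application of M\"uller's multidimensional Weyl sum bounds to the resulting weighted exponential sums, with the dominant dyadic shell at $R\sim\rho^{-1}$ producing exactly the stated expression.

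The gap is precisely the one you flag as ``the principal obstacle'' and then leave open. The phase $\phi(u)=\lambda|x+u|$ has Hessian of rank $n-1$, so M\"uller's exponential sum theorem does not apply to it directly, and your description of ``$q$ successive Weyl differencings'' does not by itself cure this. The resolution, which you do not supply, is a second ingredient from~\cite{Muller1999} (Lemma~3 there; Lemma~\ref{preparation lemma} in the paper): there is a finite homogeneous cover $(S_\ell)_{\ell=1}^L$ of $\R^n\setminus\{0\}$ together with \emph{integer} matrices $Q_\ell\in\mathrm{GL}(n,\R)$ such that the function $\Phi_\ell(u):=|Q_\ell u|$ satisfies
\[
\Big|\mathrm{Hess}\,\frac{\partial^q\Phi_\ell}{\partial u_1\partial u_n^{q-1}}(u)\Big|\gtrsim |u|^{-(q+1)n}\qquad\textrm{for all }u\in Q_\ell^{-1}S_\ell.
\]
The point is that M\"uller's theorem requires non-degeneracy not of $\mathrm{Hess}\,\phi$ but only of $\mathrm{Hess}\,\partial^{\alpha(q)}\phi$ for \emph{some} multi-index $|\alpha(q)|=q$, and the preparation lemma manufactures such an $\alpha(q)$ after a linear change of variables. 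Because each $Q_\ell$ has integer entries, $Q_\ell\Z^n$ is a finite-index sublattice of $\Z^n$, so after a homogeneous partition of unity subordinate to the $S_\ell$ one may split the sum into $O_q(1)$ cosets, change variables $k\mapsto Q_\ell k$ on each, and invoke M\"uller's theorem as a black box. The exponent $\omega_{n,q}$ is then read off directly from that theorem; no separate bookkeeping of Weyl losses is required on your part. Note also that M\"uller's theorem already allows a smooth weight satisfying symbol bounds, so the summation-by-parts reduction you describe is unnecessary: one applies the theorem directly to the weighted sum with $w=\tilde w_x^{\lambda,j}$.
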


Provided $\rho$ and $q$ are chosen so that $\rho^{q+1}$ is much smaller than $\lambda^{-1}$, this provides an improvement over the crude estimate from Lemma~\ref{crude kernel estimate}. 

Assuming Lemma~\ref{refined kernel lemma}, it is not difficult to adapt the argument of the previous section to prove the desired spectral projection bounds.

\begin{proof}[Proof (of Proposition~\ref{refined spectral proposition})] Let $q \in \N$ satisfy the hypotheses of Lemma~\ref{refined kernel lemma}. Arguing as before, Lemma~\ref{refined kernel lemma} implies that 
\begin{align*}
  \|m_1^{\lambda, \rho}(D)\|_{1 \to \infty} &\lesssim_{\varepsilon,q} \rho \lambda^{(n-1)/2} + \lambda^{\varepsilon}(\rho^{q+1} \lambda)^{\omega_{n,q}}(\lambda/\rho)^{(n-1)/2}\\
  &\lesssim \lambda^{\varepsilon}(\rho^{q+1} \lambda)^{\omega_{n,q}}(\lambda/\rho)^{(n-1)/2}.
\end{align*}
This refined estimate can be used in place of~\eqref{spectral proof 7} in the proof of Proposition~\ref{spectral proposition}. In particular, one deduces that
\begin{equation*}
    \|m^{\lambda,\rho}(D)\|_{p_1' \to p_1} \lesssim_{\varepsilon} \rho\lambda +  \lambda^{\varepsilon} (\rho^{q+1} \lambda)^{2\omega_{n,q}/n(n-1)} \rho^{1-3/n} \lambda^{1-1/n}
\end{equation*}
which provides an improved version of~\eqref{spectral proof 9}. In this case, one is led to the choice $\rho = \lambda^{-\beta_{n,q} + \varepsilon}$ where
\begin{equation*}
    \beta_{n,q} := \frac{1}{3} + \frac{n}{3} \cdot \frac{q-2}{3(n^2-1)2^q - q n - (3n-2)n} .
\end{equation*}
To optimise the estimate, $q$ should be chosen so as to make the exponent  as large as possible. Note that $\beta_{n,q} > 1/3$ whenever $q \geq 3$. Fixing $n$, a simple calculus exercise show that $\beta_{n,q}$ is a decreasing function for $q \geq 4$. Direct comparison between $\beta_{n,3}$ and $\beta_{n,4}$ then shows that $q = 3$ is always the optimal choice of parameter, if no additional constraint is imposed in the form of \eqref{rho vrs lambda}. However, it is not difficult to show that $\rho := \lambda^{-\beta_{n,3} + \varepsilon}$ automatically satisfies \eqref{rho vrs lambda}, provided $\varepsilon$ is sufficiently small. Since $\beta_n = \beta_{n,3}$, Proposition~\ref{refined spectral proposition} follows.
\end{proof}

It remains to prove Lemma~\ref{refined kernel lemma}. The argument uses two ingredients from~\cite{Muller1999}, the first of which is an elementary exponential sum bound.

\begin{theorem}[M\"uller~\cite{Muller1999}]\label{exponential sum theorem} Let $n,q \in \N$, $n \geq 2$, and $\lambda, M \geq 1$ satisfy 
\begin{equation}\label{lambda vrs M}
    \lambda \geq M^{q - 1 - 2/n + 2^{1-q}}.
\end{equation}
Suppose that $w \in C^{\infty}(\R^n)$ and $\phi \in C^{\infty}(\R^n)$ is real-valued and that these functions satisfy the following conditions:
\begin{enumerate}[i)]
    \item $\mathrm{supp}\,w$ is contained in $B(0, M)$;
    \item  $|\partial_u^{\alpha} w(u)| \lesssim_{\alpha} M^{-|\alpha|}$ and $|\partial_u^{\alpha}\phi(u)| \lesssim_{\alpha} \lambda M^{1-|\alpha|}$ for all $u \in \mathrm{supp}\,w$, $\alpha \in \N_0^n$;
    \item There exists some $\alpha(q) \in \N_0^n$ with $|\alpha(q)| = q$ such that 
    \begin{equation*}
        |\mathrm{Hess}\,\partial_u^{\alpha(q)} \phi(u)| \gtrsim (\lambda M^{-(q+1)})^{n} \qquad \textrm{for all $u \in \mathrm{supp}\,w$}.
    \end{equation*}
\end{enumerate}
Then there is a weighted exponential sum estimate
\begin{equation*}
    \Big|\sum_{k \in \Z^n} e^{2 \pi i \phi(k)} w(k) \Big| \lesssim_{\varepsilon} \lambda^{\varepsilon} M^{n} (M^{-(q+1)}\lambda)^{\omega_{n,q}}.
\end{equation*}
\end{theorem}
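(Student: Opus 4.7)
The plan is to combine the classical van der Corput $A$-process (Weyl differencing) with the $B$-process (Poisson summation together with multidimensional stationary phase), iterating $q-1$ times. Writing $S := \sum_{k \in \Z^n} w(k) e^{2\pi i \phi(k)}$, the first step is Weyl differencing: for an auxiliary scale $M' \lesssim M$ to be chosen later, Cauchy--Schwarz against a cutoff supported in a cube of side $M'$ yields
\begin{equation*}
    |S|^2 \lesssim M^n (M')^{-n} \sum_{|h| \lesssim M'} \Big| \sum_k w_h(k) e^{2 \pi i \Delta_h \phi(k)} \Big|,
\end{equation*}
where $\Delta_h \phi(u) := \phi(u+h) - \phi(u)$ and $w_h$ absorbs the overlap of translates of $w$. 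Iterating $q-1$ times reduces the problem to estimating multiple sums with phases $\Psi_{\vec h} := \Delta_{h_1} \cdots \Delta_{h_{q-1}} \phi$, whose derivatives obey $|\partial_u^\alpha \Psi_{\vec h}(u)| \lesssim |h_1| \cdots |h_{q-1}| \, \lambda M^{-q -|\alpha| +1}$.

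Next, one must show that hypothesis (iii) descends, for generic $\vec h$, to a Hessian lower bound of the form
\begin{equation*}
    |\det \mathrm{Hess}\, \Psi_{\vec h}(u)| \gtrsim |h_1|^2 \cdots |h_{q-1}|^2 \cdot (\lambda M^{-(q+1)})^n,
\end{equation*}
which one proves by writing $\Psi_{\vec h}$ as an iterated line integral of $\partial^{\alpha(q)} \phi$ along the box spanned by $h_1,\dots,h_{q-1}$, then discarding an exceptional set of $\vec h$ on which the lower bound fails. The $B$-process then applies to each inner sum: Poisson summation rewrites it as
\begin{equation*}
    \sum_k w_{\vec h}(k) e^{2\pi i \Psi_{\vec h}(k)} = \sum_{\ell \in \Z^n} \int_{\R^n} w_{\vec h}(u) e^{2\pi i (\Psi_{\vec h}(u) - \ell \cdot u)} \, \ud u,
\end{equation*}
and multidimensional stationary phase produces, for each resonant $\ell$ lying in the image of $\nabla \Psi_{\vec h}$, a gain of $|\det \mathrm{Hess}\, \Psi_{\vec h}|^{-1/2}$ over the trivial bound $M^n$. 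Condition \eqref{lambda vrs M} is precisely what guarantees that this stationary phase regime dominates over the Poisson tail contributions (ensuring that the second derivatives of $\Psi_{\vec h}$ are genuinely small relative to the $\lambda M^{-(q+1)}$ scale).

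Raising the combined estimate on $|S|^{2^{q-1}}$ to the power $2^{1-q}$, balancing the scale $M'$ at each differencing stage, and collecting the losses yields the stated bound, with the exponent $\omega_{n,q} = n / (2n(2^q-1) + 2^{q+1})$ arising from the interplay of the $2^{q-1}$-fold power lost to Weyl differencing, the $n/2$ stationary phase gain, and the geometric factor $(M')^n$ paid per differencing. The main obstacle is verifying the Hessian lower bound on $\Psi_{\vec h}$: a naive iteration of the $A$-process only gives algebraic control on low-order derivatives, so one must carefully invoke hypothesis (iii) together with a fundamental theorem of calculus representation to propagate the quantitative nondegeneracy through all $q-1$ differences, while excising a measure-theoretically negligible set of bad $\vec h$. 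This multi-index bookkeeping is the technical heart of \cite{Muller1999} and is where the bulk of the proof effort must be spent.
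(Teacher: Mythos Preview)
The paper does not prove this theorem. It is quoted from M\"uller~\cite{Muller1999} and used as a black box ingredient in the proof of Lemma~\ref{refined kernel lemma}; the text introduces it simply as ``an elementary exponential sum bound'' and provides no argument. So there is no proof in the paper to compare your proposal against.

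That said, your sketch is a faithful outline of M\"uller's method: iterate the $A$-process (Weyl differencing) $q-1$ times and then apply the $B$-process (Poisson summation followed by $n$-dimensional stationary phase) to the differenced phase $\Psi_{\vec h}$. You have also correctly isolated the genuine technical obstruction, namely showing that the quantitative Hessian nondegeneracy in hypothesis iii) survives all $q-1$ differencing steps for a sufficiently large set of shift vectors $\vec h$, and that condition~\eqref{lambda vrs M} is what puts the final sum in the stationary-phase regime. One small point to watch when you carry this out in detail: the homogeneity of $\det\mathrm{Hess}\,\Psi_{\vec h}$ in each $h_i$ is degree~$n$ (each matrix entry is linear in each $h_i$, and the determinant is homogeneous of degree~$n$ in the entries), not degree~$2$ as your displayed lower bound suggests; this affects the intermediate bookkeeping, though not the overall structure of the argument or the final exponent $\omega_{n,q}$.
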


Here $\mathrm{Hess}$ is used to denote the Hessian determinant and, as before, $|\alpha| := \alpha_1 + \cdots + \alpha_n$.

For the phases and weights arising in the proof of Lemma~\ref{refined kernel lemma} it is straightforward to verify conditions i) and ii) of Theorem~\ref{exponential sum theorem}.  Condition iii), however, only holds locally and after applying a linear coordinate transformation. The existence of such a coordinate transformation is the second ingredient from~\cite{Muller1999}. 

\begin{lemma}[M\"uller~\cite{Muller1999}]\label{preparation lemma} For $n, q \in \N$, $n \geq 2$ there exist open regions $S_{\ell} \subset \R^n \setminus \{0\}$ and integer matrices $Q_{\ell} \in \mathrm{GL}(n,\R)$ for $1 \leq i \leq L = L(n,q) \in \N$ with the following properties:
\begin{enumerate}[i)]
    \item $\R^n \setminus \{0\} \subseteq \bigcup_{i=1}^L S_{\ell}$ and if $x \in S_{\ell}$ and $\lambda > 0$, then $\lambda x \in S_{\ell}$;
    \item The function $\Phi_{\ell} \colon \R^n \to \R$ given by $\Phi_{\ell}(u):= | Q_{\ell} u|$ satisfies 
\begin{equation*}
    \Big|\mathrm{Hess}\, \frac{\partial^{q}\Phi_{\ell}}{\partial u_1 \partial u_n^{q-1}}(u) \Big| \gtrsim |u|^{-(q+1)n} \qquad \textrm{for all $u \in Q_{\ell}^{-1}S_{\ell}$}.
\end{equation*}
\end{enumerate}

\end{lemma}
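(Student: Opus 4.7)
The plan is to reduce the lemma to an algebraic non-vanishing statement for a certain Hessian determinant, and then to cover $\R^n \setminus \{0\}$ by sectors using a finite family of integer matrices via a Zariski-density argument.

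Define $\Psi_Q(u) := \partial_u^{\alpha(q)}|Qu|$ with $\alpha(q) := (1, 0, \ldots, 0, q-1)$. By the Leibniz rule applied to $|u| = (u_1^2 + \cdots + u_n^2)^{1/2}$, $\Psi_Q(u)$ is a rational function in $u$ of homogeneous degree $1 - q$ that depends polynomially on the entries of $Q$; consequently $D_Q(u) := \det \mathrm{Hess}_u\, \Psi_Q(u)$ is rational in $u$, homogeneous of degree $-(q+1)n$, and polynomial in $Q$. The crux of the argument is to show $D_Q(u) \not\equiv 0$ as a function of $(Q, u)$. This reduces to the concrete task of exhibiting a single pair at which $D_Q$ is non-zero---for instance, $Q_\ast = I$ together with a generic point such as $u_\ast = (1, 2, \ldots, n)$---and verifying the non-vanishing by direct computation.

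For each direction $e \in S^{n-1}$, the rational function $F_e(Q) := D_Q(Q^{-1}e)$ on $\mathrm{GL}(n, \R)$ is not identically zero: the fibres $\{(Q, Q^{-1}e) : Q \in \mathrm{GL}(n, \R)\}$, as $e$ varies, sweep out all of $\mathrm{GL}(n, \R) \times (\R^n \setminus \{0\})$, so the identical vanishing of $F_e$ for some $e$ would, by the transitive action of $\mathrm{GL}(n, \R)$ on non-zero vectors, force $D_Q(u)$ itself to vanish identically, contradicting the previous step. Since a non-zero polynomial in the $n^2$ entries of $Q$ cannot vanish at every integer lattice point of non-zero determinant, there exists an integer matrix $Q_e \in \mathrm{GL}(n, \Z)$ with $D_{Q_e}(Q_e^{-1}e) \neq 0$. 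By continuity and the homogeneity of $D_{Q_e}$ in $u$, the bound $|D_{Q_e}(u)| \gtrsim |u|^{-(q+1)n}$ holds on an open conic neighborhood of the ray through $Q_e^{-1}e$; equivalently, setting $\Phi_{Q_e}(u) := |Q_e u|$, the estimate $|\mathrm{Hess}\,\partial_u^{\alpha(q)}\Phi_{Q_e}(u)| \gtrsim |u|^{-(q+1)n}$ holds on $Q_e^{-1}S_e$ for an open conic neighborhood $S_e$ of the ray $\R_{+} e$. Compactness of $S^{n-1}$ then yields finitely many such pairs $(S_\ell, Q_\ell)$ covering $\R^n \setminus \{0\}$, as required.

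The main obstacle is the non-vanishing step $D_Q(u) \not\equiv 0$, which is ultimately a finite algebraic computation but one whose combinatorial complexity grows with $q$---a direct approach requires tracking all terms produced by iteratively applying the operator $\partial_1 \partial_n^{q-1}$ to $|u|$. A cleaner alternative may be an inductive argument in $q$, relating the Hessian of $\Psi_Q$ at stage $q$ to that at stage $q-1$ via the identity $\Psi_Q^{(q+1)} = \partial_{u_n} \Psi_Q^{(q)}$ and exploiting a factorisation of the Hessian determinant. The subsequent covering step via integer matrices, though essentially soft, relies on the Zariski density of integer lattice points for non-identically-zero polynomials---a standard observation.
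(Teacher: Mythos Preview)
The paper does not actually prove this lemma: it cites M\"uller's Lemma~3 for the Hessian non-degeneracy and then observes that homogeneity plus compactness of $S^{n-1}$ produces the finite conic cover. Your framework follows exactly the same reduction---homogeneity, local non-vanishing, compactness---and adds a clean Zariski-density argument to manufacture \emph{integer} matrices from real ones, which is a nice clarification not spelled out in the paper.

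The genuine gap is the step you flag yourself: the non-vanishing $D_I(u)\not\equiv 0$. Proposing to check this ``by direct computation'' at a single point $u_*=(1,2,\dots,n)$ is not a proof for arbitrary $q$: for each fixed $q$ it is a different finite verification, and without a closed-form expression or a completed induction you have left infinitely many cases unchecked. This algebraic non-vanishing is precisely the content of M\"uller's Lemma~3, so at the point where the paper defers to the literature you are deferring to an unexecuted computation. Two smaller issues: your transitivity step requires the observation that $D_{AQ}=D_Q$ for $A\in O(n)$ (since $\Phi_Q$ depends on $Q$ only through $Q^{\top}Q$); mere $\mathrm{GL}(n,\R)$-transitivity on vectors does not give the implication ``$F_e\equiv 0$ for one $e\Rightarrow F_{e'}\equiv 0$ for all $e'$''. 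And note that the lemma asks only for integer matrices in $\mathrm{GL}(n,\R)$, not $\mathrm{GL}(n,\Z)$, which is all your density argument delivers.
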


This follows from~\cite[Lemma 3]{Muller1999}. In particular, it suffices to find an open covering of the unit sphere (rather than the whole of $\R^n \setminus \{0\}$) satisfying property ii), since the full result then follows by homogeneity. The desired cover can then be obtained by combining~\cite[Lemma 3]{Muller1999} with a compactness argument.

\begin{proof}[Proof (of Lemma~\ref{refined kernel lemma})] The proof is similar to that of Theorem 1 in~\cite{Muller1999}. 

By \eqref{spectral proof 12}, one may write
\begin{equation*}
    \big(m_1^{\lambda,\rho}\big)\;\widecheck{}\;(x) =  \rho \sum_{\pm} e^{\pm 2 \pi i \lambda |x|}I_{\pm}^{\lambda,\rho}(x)
\end{equation*}
where
\begin{equation*}
I_{\pm}^{\lambda, \rho}(x) := \int_0^{\infty} e^{\pm 2 \pi i(r - \rho^{-1}\lambda)\rho|x|} a_{\pm}(\rho rx) \beta\big(r - \rho^{-1}\lambda\big) (\rho r)^{n-1}\,\ud r \cdot (1 - \check{\eta}(x)).
\end{equation*}
Applying integration-by-parts as in \eqref{spectral proof 13}, it follows that 
\begin{equation}\label{refined proof 1}
   |\partial_x^{\alpha} I_{\pm}^{\lambda,\rho}(x)| \lesssim_{\alpha}  \frac{ \lambda^{(n-1)/2}}{|x|^{(n-1)/2 + |\alpha|}} (1 + \rho|x|)^{-N} \quad \textrm{for all $\alpha \in \N_0^n$}
\end{equation}
where $N := \lceil 100n \varepsilon^{-1} \rceil $. Note that this is a substantially larger (but still admissible) choice of $N$ than that used in the previous arguments. With this choice, it follows, for instance, that $ (1 + \rho|x|)^{-N} \lesssim_{\varepsilon} \rho^{100n}$ whenever $|x| > \rho^{-1-\varepsilon}$.

Since the functions $I_{\pm}^{\lambda, \rho}$ decay rapidly when $|x| \geq \rho^{-1}$, it suffices to show that
\begin{equation}\label{refined proof 2}
    \sup_{x \in [1/2,1/2]^n} \Big|\sum_{ \substack{k \in \Z^n \setminus \{0\} \\ |x+k| \leq \rho^{-1-\varepsilon}}} e^{2 \pi i\lambda|x+k|} I_{\pm}^{\lambda,\rho}(x + k) \Big| \lesssim_{\varepsilon} \lambda^{\varepsilon} \rho^{-1} (\rho^{q+1} \lambda)^{\omega_{n,q}} (\lambda/\rho)^{(n-1)/2}
\end{equation}
holds for all $q \in \N$ satisfying \eqref{rho vrs lambda}. The support of the weight functions $I^{\lambda,\rho}_{\pm}$ are decomposed dyadically by writing
\begin{equation*}
    I^{\lambda,\rho}_{\pm} = \sum_{j \in \Z} I^{\lambda,\rho}_{\pm, j} \qquad \textrm{where} \qquad I^{\lambda,\rho}_{\pm, j}(x) := I^{\lambda,\rho}_{\pm}(x)\zeta(2^{-j}|x|)
\end{equation*}
for a suitable choice of $\zeta \in C^{\infty}_c(\R)$ satisfying $\mathrm{supp}\, \zeta \subseteq [1/2,2]$. For any fixed value of $x \in [-1/2,1/2]^n$ there are only $O(\log \rho^{-1})$ values of $j$ for which $I^{\lambda,\rho}_{\pm, j}(x+k)$ is non-zero as $k$ varies over all $k \in \Z^n \setminus \{0\}$ satisfying $|x+k| \leq \rho^{-1-\varepsilon}$. Thus, by dyadic pigeonholing, it suffices to show  \eqref{refined proof 2} holds with $I^{\lambda,\rho}_{\pm}$ replaced with $I^{\lambda,\rho}_{\pm, j}$ for some fixed choice of $j$ satisfying $1 \lesssim 2^j \lesssim \rho^{-1-\varepsilon}$.

Fix $q \in \N$ satisfying \eqref{rho vrs lambda} and a choice of sign $\pm$ and let 
\begin{equation*}
w^{\lambda,j}(u) := \lambda^{-(n-1)/2}2^{j(n-1)/2}I_{\pm,j}^{\lambda,\rho}(u)  \qquad \textrm{and} \qquad \phi^{\lambda}(u) :=  \pm \lambda |u|.
\end{equation*}
Given any $x \in \R^n$, define the translates
\begin{equation*}
    w_x^{\lambda,j}(u) := w^{\lambda,j}(x + u) \qquad \textrm{and} \qquad  \phi_x^{\lambda}(u) := \phi^{\lambda}(x + u)
\end{equation*} 
 and observe that, by \eqref{refined proof 1}, if $u \in \mathrm{supp}\,w_x^{\lambda,j}$, then
\begin{equation}\label{refined proof 3}
    |\partial_u^{\alpha} w_x^{\lambda,j}(u)| \lesssim_{\alpha} 2^{-j|\alpha|} \quad \textrm{and} \quad |\partial_u^{\alpha}\phi_x^{\lambda}(u)| \lesssim_{\alpha} \lambda 2^{j(1-|\alpha|)} \quad \textrm{for all $\alpha \in \N_0^n$.}
\end{equation}
Thus, in view of the above reductions, it suffices to show that
\begin{equation}\label{refined proof 4}
    \sup_{x \in \R^n} \Big|\sum_{ k \in \Z^n} e^{2 \pi i\phi^{\lambda}_x(k)} w_x^{\lambda,j}(k) \Big| \lesssim_{\varepsilon, q} \lambda^{\varepsilon} 2^{j n} (2^{-j(q+1)} \lambda)^{\omega_{n,q}}.
\end{equation}
Note that the reduction in \eqref{refined proof 4} relies upon the (readily checked) fact 
\begin{equation*}
    \frac{n+1}{2} - (q+1)\omega_{n,q} > 0 \quad \textrm{for all $n, q \in \N$ with $n \geq 2$}
\end{equation*}
which, in particular, implies that
\begin{equation*}
    2^{j(n+1)/2} 2^{-j(q+1)\omega_{n,q}} \lesssim \rho^{-O(\varepsilon)}  \rho^{-1}\rho^{(q+1)\omega_{n,q}}\rho^{-(n-1)/2}.
\end{equation*}
The estimate \eqref{refined proof 4} will follow from Theorem~\ref{exponential sum theorem}, although some preparatory steps are needed to ensure the conditions of the theorem hold in this case.

Let $S_{\ell} \subset \R^n\setminus\{0\}$ and $Q_{\ell} \in \mathrm{GL}(n,\R)$ for $1 \leq \ell \leq L$ be open sets and integer matrices, respectively, satisfying the properties i) and ii) from Lemma~\ref{preparation lemma}. By forming a homogeneous partition of unity adapted to the $(S_{\ell})_{\ell=1}^L$ and pigeonholing, it suffices to show that
\begin{equation*}
    \sup_{x \in \R^n} \Big|\sum_{k \in \Z^n} e^{2 \pi i \phi^{\lambda}_x(k)}w^{\lambda,j}_x(k)\psi_x(k)\Big| \lesssim \lambda^{\varepsilon}2^{jn} (2^{-j(q+1)}\lambda)^{\omega_{n,q}},
\end{equation*}
where $\psi_x(u) := \psi(x + u)$ for $\psi \in C^{\infty}(\R^n \setminus\{0\})$ real-valued, homogeneous of degree 0 and supported in $S := S_{\ell_0}$ for some $1 \leq \ell_0 \leq L$.  

Let $Q := Q_{\ell_0}$ and note that the lattice $Q \Z^n$ is a finite index subgroup of $\Z^n$. Thus, there exist some $\mathcal{B} \subseteq \Z^n$ with $\# \mathcal{B} \lesssim_q 1$ such that
\begin{equation*}
    \Z^n = \bigcup_{b \in \mathcal{B}} (b + Q \Z^n),
\end{equation*}
where the union is disjoint. Fix $b \in \mathcal{B}$ and write
\begin{equation*}
    \tilde{\phi}_x^{\lambda}(u) := \phi_x^{\lambda}(b + Q u) \qquad \textrm{and} \qquad \tilde{w}_x^{\lambda,j}(u) :=w_x^{\lambda,j}(b+ Q u)\psi_x(b + Q u).
\end{equation*}
Once again by pigeonholing, the desired estimate would follow from
\begin{equation*}
    \sup_{x \in \R^n} \Big|\sum_{k \in \Z^n} e^{2 \pi i \tilde{\phi}^{\lambda}_x(k)}\tilde{w}^{\lambda,j}_x(k)\Big| \lesssim \lambda^{\varepsilon}2^{j n} (2^{-j(q+1)}\lambda)^{\omega_{n,q}}.
\end{equation*}

To conclude the proof, it suffices to show that, for any $x \in \R^n$, the functions $\tilde{\phi}^{\lambda}_x$ and $\tilde{w}^{\lambda,j}_x$ satisfy the hypotheses of Theorem~\ref{exponential sum theorem} with $M \sim_q 2^j$ and $\alpha(q) := (1, 0, \dots, q-1)$; since $q$ is chosen so as to satisfy \eqref{rho vrs lambda}, one may safely assume \eqref{lambda vrs M} holds for such a choice of $M$. Clearly the support condition i) holds. By \eqref{refined proof 3} and the homogeneity of $\psi$, it follows that
\begin{equation*}
    |\partial_u^{\alpha} \tilde{w}^{\lambda,j}(u)| \lesssim_{\alpha} 2^{-j|\alpha|} \quad \textrm{and} \quad |\partial_u^{\alpha}\tilde{\phi}^{\lambda}(u)| \lesssim_{\alpha} \lambda 2^{j(1-|\alpha|)} \quad \textrm{for all $\alpha \in \N_0^n$,}
\end{equation*}
which is condition ii). Finally, Lemma~\ref{preparation lemma} ensures that
\begin{equation*}
  \big|\mathrm{Hess}\,\partial^{\alpha(q)}_u\tilde{\phi}^{\lambda}_x(u) \big| \gtrsim (\lambda 2^{-j(q+1)})^n \qquad \textrm{for all $u \in \mathrm{supp}\,\tilde{w}_x^{\lambda,j}$}.
\end{equation*}
Indeed, if $u \in \mathrm{supp}\,\tilde{w}_x^{\lambda,j}$, then $x + b + Q u \in S$ and so $\tilde{x} + u \in Q^{-1}S$ for $\tilde{x} := Q^{-1}(x+b)$. If $\Phi(u) := |Q u|$, then $\tilde{\phi}^{\lambda}_x(u) = \pm \lambda \Phi(\tilde{x} + u)$ and so Lemma~\ref{preparation lemma} implies that
\begin{equation*}
    |\mathrm{Hess}\,\partial^{\alpha(q)}_u \tilde{\phi}^{\lambda}_x(u)| = \lambda^n|\mathrm{Hess}\,\partial^{\alpha(q)}_u \Phi(\tilde{x} + u)| \gtrsim \lambda^n 2^{-(q+1)n},
\end{equation*}
as required. 
\end{proof}




\bibliography{Reference}

\providecommand{\bysame}{\leavevmode\hbox to3em{\hrulefill}\thinspace}
\providecommand{\MR}{\relax\ifhmode\unskip\space\fi MR }
\providecommand{\MRhref}[2]{%
  \href{http://www.ams.org/mathscinet-getitem?mr=#1}{#2}
}
\providecommand{\href}[2]{#2}
\begin{thebibliography}{10}

\bibitem{Bourgain1993}
J.~Bourgain, \emph{Eigenfunction bounds for the {L}aplacian on the
  {$n$}-torus}, Internat. Math. Res. Notices (1993), no.~3, 61--66.
  \MR{1208826}

\bibitem{Bourgain2013}
\bysame, \emph{Moment inequalities for trigonometric polynomials with spectrum
  in curved hypersurfaces}, Israel J. Math. \textbf{193} (2013), no.~1,
  441--458. \MR{3038558}

\bibitem{Bourgain1997}
Jean Bourgain, \emph{Analysis results and problems related to lattice points on
  surfaces}, Harmonic analysis and nonlinear differential equations
  ({R}iverside, {CA}, 1995), Contemp. Math., vol. 208, Amer. Math. Soc.,
  Providence, RI, 1997, pp.~85--109. \MR{1467003}

\bibitem{BD2013}
Jean Bourgain and Ciprian Demeter, \emph{Improved estimates for the discrete
  {F}ourier restriction to the higher dimensional sphere}, Illinois J. Math.
  \textbf{57} (2013), no.~1, 213--227. \MR{3224568}

\bibitem{BD2015a}
\bysame, \emph{New bounds for the discrete {F}ourier restriction to the sphere
  in 4{D} and 5{D}}, Int. Math. Res. Not. IMRN (2015), no.~11, 3150--3184.
  \MR{3373047}

\bibitem{BD2015}
\bysame, \emph{The proof of the {$l^2$} decoupling conjecture}, Ann. of Math.
  (2) \textbf{182} (2015), no.~1, 351--389. \MR{3374964}

\bibitem{BSSY}
Jean Bourgain, Peng Shao, Christopher~D. Sogge, and Xiaohua Yao, \emph{On
  {$L^p$}-resolvent estimates and the density of eigenvalues for compact
  {R}iemannian manifolds}, Comm. Math. Phys. \textbf{333} (2015), no.~3,
  1483--1527. \MR{3302640}

\bibitem{DKS}
David Dos Santos~Ferreira, Carlos~E. Kenig, and Mikko Salo, \emph{On {$L^p$}
  resolvent estimates for {L}aplace-{B}eltrami operators on compact manifolds},
  Forum Math. \textbf{26} (2014), no.~3, 815--849. \MR{3200351}

\bibitem{Guo2012}
Jingwei Guo, \emph{On lattice points in large convex bodies}, Acta Arith.
  \textbf{151} (2012), no.~1, 83--108. \MR{2853046}

\bibitem{Hlawka1950}
Edmund Hlawka, \emph{\"{U}ber {I}ntegrale auf konvexen {K}\"{o}rpern. {I}},
  Monatsh. Math. \textbf{54} (1950), 1--36. \MR{0037003}

\bibitem{KRS}
C.~E. Kenig, A.~Ruiz, and C.~D. Sogge, \emph{Uniform {S}obolev inequalities and
  unique continuation for second order constant coefficient differential
  operators}, Duke Math. J. \textbf{55} (1987), no.~2, 329--347. \MR{894584}

\bibitem{KN1991}
Ekkehard Kr\"{a}tzel and Werner~Georg Nowak, \emph{Lattice points in large
  convex bodies}, Monatsh. Math. \textbf{112} (1991), no.~1, 61--72.
  \MR{1122105}

\bibitem{KN1992}
\bysame, \emph{Lattice points in large convex bodies. {II}}, Acta Arith.
  \textbf{62} (1992), no.~3, 285--295. \MR{1197422}

\bibitem{Muller1999}
Wolfgang M\"{u}ller, \emph{Lattice points in large convex bodies}, Monatsh.
  Math. \textbf{128} (1999), no.~4, 315--330. \MR{1726766}

\bibitem{Shen2001}
Zhongwei Shen, \emph{On absolute continuity of the periodic {S}chr\"{o}dinger
  operators}, Internat. Math. Res. Notices (2001), no.~1, 1--31. \MR{1809495}

\bibitem{Sogge1988}
Christopher~D. Sogge, \emph{Concerning the {$L^p$} norm of spectral clusters
  for second-order elliptic operators on compact manifolds}, J. Funct. Anal.
  \textbf{77} (1988), no.~1, 123--138. \MR{930395}

\bibitem{Sogge2017}
\bysame, \emph{Fourier integrals in classical analysis}, second ed., Cambridge
  Tracts in Mathematics, vol. 210, Cambridge University Press, Cambridge, 2017.
  \MR{3645429}

\bibitem{Stein1993}
Elias~M. Stein, \emph{Harmonic analysis: real-variable methods, orthogonality,
  and oscillatory integrals}, Princeton Mathematical Series, vol.~43, Princeton
  University Press, Princeton, NJ, 1993, With the assistance of Timothy S.
  Murphy, Monographs in Harmonic Analysis, III. \MR{1232192}

\bibitem{Tao2004}
Terence Tao, \emph{Some recent progress on the restriction conjecture}, Fourier
  analysis and convexity, Appl. Numer. Harmon. Anal., Birkh\"{a}user Boston,
  Boston, MA, 2004, pp.~217--243. \MR{2087245}

\end{thebibliography}
\bibliographystyle{amsplain}

\end{document}